\theoremstyle{plain}
\newtheorem{theorem}{Theorem}[section]
\renewcommand{\thetheorem}{%
	\ifnum\value{section}>0
	\thesection.%
	\else
	\thechapter.%
	\fi
	\arabic{theorem}%
}
\newtheorem{lemma}[theorem]{Lemma}
\newtheorem{proposition}[theorem]{Proposition}
\newtheorem{corollary}[theorem]{Corollary}
\newtheorem{definition}[theorem]{Definition}
\newtheorem{example}[theorem]{Example}
\newtheorem{remark}[theorem]{Remark}
\newtheorem*{remark*}{Remark}
\newtheorem*{question*}{Open question}
\newtheorem*{theorem*}{Theorem}
\newtheorem*{lemma*}{Lemma}
\newtheorem*{proposition*}{Proposition}
\newtheorem*{corollary*}{Corollary}
\newtheorem*{propriety*}{Property}
\newtheorem*{definition*}{Definition}
\newtheorem*{example*}{Example}
\newtheorem*{notation*}{Notation}
\newcommand{\B}{\mathcal{B}}
\newcommand{\E}{\mathbb{E}}
\newcommand{\F}{\mathcal{F}}
\newcommand{\G}{\mathcal{G}}
\newcommand{\M}{\mathcal{M}}
\newcommand{\N}{\mathbb{N}}
\renewcommand{\P}{\mathbb{P}}
\newcommand{\R}{\mathbb{R}}
\newcommand{\Z}{\mathbb{Z}}
\newcommand{\borel}[1]{\B\left(#1\right)}
\newcommand{\Vol}[1]{\operatorname{Vol}\left(#1\right)}
\newcommand{\card}[1]{\operatorname{card}\left(#1\right)}
\def\esp{\@ifnextchar[{\@withe}{\@withoute}}
\def\@withe[#1]#2{\E_{#1}\left[#2\right]}
\def\@withoute#1{\E\left[#1\right]}
\def\var{\@ifnextchar[{\@withv}{\@withoutv}}
\def\@withv[#1]#2{\mathbb{V}_{#1}\left(#2\right)}
\def\@withoutv#1{\mathbb{V}\left(#1\right)}
\newcommand{\Id}{\operatorname{Id}}
\newcommand\restr[2]{{
		\left.\kern-\nulldelimiterspace 
		#1
		\littletaller
		\right|_{#2}
}}
\newcommand{\littletaller}{\mathchoice{\vphantom{\big|}}{}{}{}}
\numberwithin{equation}{section}
\title{Palm perturbed measures on Abelian topological groups}
\author{Loïc Thomassey}
\begin{document}

\begin{abstract}
	We show that the perturbation of a Palm measure by an independent process with stationary increments remain a Palm measure.
\end{abstract}

\maketitle

\keywords{\textbf{Keywords:} Point process, Palm measure, stochastic process with stationary increments}

\tableofcontents
	
\section{Introduction}

This paper introduces perturbed Palm measures. Here the focus is on stationary point processes, that is a random configurations of points $\{\xi_{n} : n \in \Z\}$, say on the real line, and whose distribution is invariant under translations. It is a standard fact that, if an existing stationary point process $\xi$ is subjected to an independent stationary displacement field $X$, the resulting point process $\smash{\xi_{X}=\{\xi_{n}+X_{\xi_{n}} : n\in \Z\}}$ remains stationary. \medbreak

In this article, we take a new standpoint. Instead of looking for perturbations preserving the stationarity of a point process $\xi$, we look for perturbations preserving its Palm measure $\smash{\hat{\xi}=\{\hat{\xi}_{n} : n\in \Z\}}$. The latter is loosely speaking the distribution of $\xi$ conditioned on the event $0\in \xi$, and we will show that Palm measures are preserved by stochastic processes with stationary increments. More explicitly, if $B$ is a stochastic process with stationary increments, independent of $\hat{\xi}$, then $\smash{\hat{\xi}_{B}=\{\hat{\xi}_{n}+B_{\hat{\xi}_{n}}: n \in \Z\}}$ remains a Palm measure, which we call a perturbed Palm measure. \medbreak

As any Palm measure is uniquely associated with a stationary point process, this procedure provides a novel way to construct stationary point processes via perturbations of their Palm measure. In what follows, we will generalise this construction to random measures on an Abelian topological group.

\section{Palm measures and weakly stationary stochastic processes }\label{sec:definion}

\subsection{Palm measures and stationarity} 

This section is a short but formal primer on Palm measures, as introduced by Mecke in his seminal paper \cite{Mecke1967}. Most of the proofs are omitted, but the interested reader is invited to consult Daley and Vere-Jones\cite{Daley2003,Daley2008} or Last \cite{Last2009} for further details.\medbreak

Let $(G,+)$ be an additive, Abelian, Hausdorff, locally compact and second-countable topological group endowed with its Borel $\sigma$-algebra $\G$. A measure $\xi$ on $G$ is called locally-finite if $\xi(K)<+\infty$ for all compact sets $K$, and we denote by $\mathrm{M}(G)$, or $\mathrm{M}$, the space of all locally finite positive measures on $G$. This set is equipped with the $\sigma$-algebra $\M(G)$ generated by the mappings
\begin{equation*}
	\Lambda_{K}:\left\{
	\begin{array}{cccc}
		&\mathrm{M}(G)&\longrightarrow&\R_{+}\\
		&\xi&\longmapsto&\xi(K)
	\end{array}
	\right., \quad K \text{ compact}.
\end{equation*}

We now provide two fundamental examples of locally finite measures.\medbreak

\subsubsection{Haar measures} Say a measure $\lambda_{G}$ is \textit{invariant} on $G$ if
\begin{equation*}
	\int_{G} f(s+t)\, \lambda_{G}(\mathrm{d}s) = \int_{G} f(s)\, \lambda_{G}(\mathrm{d}s), \quad t\in G,
\end{equation*}
for all measurable mappings $f : G \longrightarrow \R_{+}$. A landmark result in the field of topological groups, see Folland \cite[Chapter 2]{Folland2016}, is the existence and uniqueness, up to a multiplicative factor, of an invariant and locally finite measure $\lambda_{G}$. The latter is known as the \textbf{Haar measure} of $G$.\medbreak

\subsubsection{Counting measures} The second instance of locally finite measures we shall encounter in this paper are \textbf{counting measures}. Those consist of locally finite Borel measures of the form
\begin{equation*}
	\xi=\sum_{n\in \Z}\mu_{n}\delta_{\xi_{n}},
\end{equation*}
where $\mu_{n}> 0$ and $\{\xi_{n} : n\in \Z\}$ is a locally finite set. $\xi_{n}$ are known as the atoms of $\xi$ and if each atom has multiplicity $\mu_{n}=1$, $\xi$ is said to be a \textbf{simple}. In what follows, the sets of counting measures and simple counting measures are referred respectively as $\mathrm{N}(G)$, or $\mathrm{N}$ and $\mathrm{N}_{s}(G)$, or $\mathrm{N}_{s}$.

\begin{notation*}
	There is a one-to-one correspondence between simple counting measures $\xi$ and locally finite closed sets $F$. When referring to a simple counting measure,  we will therefore write $\xi=F$ instead of $\xi=\sum_{t\in F}\delta_{t}$.
\end{notation*}

Finally, we introduce some definitions. We say that a random variable $\xi$ with distribution $\mathrm{P}$ is a \textbf{random measure}, and 
\begin{itemize}
	\item a \textbf{(non-simple) point-process} if it is supported on $\mathrm{N}(G)$,
	\item a \textbf{(simple) point process} if it supported on $\mathrm{N}_{s}(G)$.
\end{itemize}

\subsubsection{Stationarity and Palm measures}\label{sec:stationarity_palm}

In this section, $\mathrm{P}$ refers to a $\sigma$-finite Borel measure on $\mathrm{M}(G)$, not necessarily finite. \medbreak

The translations 
\begin{equation*}
	\tau_{t} : \left\{
	\begin{array}{cccc}
		&\mathrm{M}(G) &\longrightarrow& \mathrm{M}(G)\\
		&\mu&\longmapsto&\xi(.+t)
	\end{array}
	\right., \quad t\in G,
\end{equation*}
induce a continuous group action of $G$ on $\mathrm{M}(G)$, and an event $A \in \mathcal{M}(G)$ is called $\mathbf{\tau}$\textbf{-invariant} if 
\begin{equation*}
	\tau_{t}A=A, \quad t\in G.
\end{equation*}
Following these definitions, we say that $\mathrm{P}$ is
\begin{itemize}
	\item  \textbf{stationary} if its distribution is invariant with respect to translations, 
	\begin{equation*}
		\tau_{t} \circ \mathrm{P}=\mathrm{P}, \quad t\in G,
	\end{equation*}
	\item \textbf{ergodic} if it is stationary, and if either $\mathrm{P}(A)=0$ or $\mathrm{P}({}^{c}A)=0$ for any $\tau$-invariant event $A \in \mathcal{M}(G)$.
\end{itemize}

As the distribution of $\mathrm{P}$ is invariant with respect to translations, one can informally imagine re-centring the process around one of its atom. This idea is fruitful and motivates the introduction of the \textbf{Palm measure} of $\mathrm{P}$. The latter, denoted by $\mathrm{P}_{0}$, is the $\sigma$-finite measure on $\mathrm{M}(G)$ defined by the change of measure
\begin{equation}\label{eq:Palm_measure}
	\mathrm{P}_{0}(A)=\int_{\mathrm{M}}\int_{G} \omega(t)\mathds{1}(\tau_{t}\xi \in A) \, \xi(\mathrm{d}t)\,\mathrm{P}_{0}(\mathrm{d}\xi)
\end{equation}	
where $\omega : G \longrightarrow \R_{+}$ is any measurable mapping satisfying 
\begin{equation*}
	\int_{G}\omega(t)\, \mathrm{d}t =1,
\end{equation*}
the preceding definition being independent of the choice of $\omega$. $\mathrm{P}_{0}$ is itself a $\sigma$-finite measure of mass
\begin{equation*}
	\lambda_{P}=\int_{\mathrm{M}}\int_{G}\omega(t)\, \xi(\mathrm{d}t)\, \mathrm{P}(\mathrm{d}\xi).
\end{equation*}
where $\lambda_{P}$ is known as the \textbf{intensity} of $\mathrm{P}$. When $\lambda_{P}$ is finite, $\mathrm{P}_{0}$ can be renomalized into a probability measure $\P_{0}$, known as the \textbf{Palm distribution} of $\mathrm{P}$. \medbreak

So far, we have seen that any stationary measure $\mathrm{P}$ is associated with a unique Palm measure $\mathrm{P}_{0}$. It is natural to wonder whether the converse holds true, that is whether a Palm measure identifies uniquely with a stationary measure. Historically, this has been the main motivation behind the development of Palm measures and this question is intrinsically related to Campbell's refined theorem \cite[Theorem 2.6]{Last2009}. We won't state Campbell's theorem, but rather recall a corollary known as \textbf{Palm inversion formula}, see \cite[Theorem 13.2.V]{Daley2008}. The latter states, that up an atomic part, $\mathrm{P}$ can be explicitly recovered via its Palm measure $\mathrm{P}_{0}$ as
\begin{equation}\label{eq:inverse_Palm_measure}
	\mathrm{P}(A)=\int_{\mathrm{M}}\int_{G}\, \mathds{1}(\tau_{-t}\xi \in A)k(\tau_{-t}\xi, t)\, \lambda_{G}(\mathrm{d}t)\, \mathrm{P}_{0}(\mathrm{d}\xi) + \mathrm{P}(\{\emptyset\})\mathds{1}(\emptyset \in A)
\end{equation}
with $\emptyset$ denoting the zero-measure on $G$ and $k : \mathrm{M}\times G\longrightarrow \R_{+}$ a kernel satisfying
\begin{equation}\label{eq:kernel}
	\int_{G} k(\xi, t)\, \xi(\mathrm{d}t) = 1, \quad \xi \neq \emptyset.
\end{equation}

The preceding equation has important implications. First, it does not depend on the choice of $k$. Second, it underlines the fact that $\mathrm{P}$ can be entirely recovered by the knowledge of $\mathrm{P}_{0}$, up to an atomic part supported on $\{\emptyset\}$. Consequently, there is a one-to-one correspondence between stationary measures $\mathrm{P}$ satisfying $\mathrm{P}(\{\emptyset\})=0$ and Palm measures $\mathrm{P}_{0}$. To avoid any unnecessary difficulty, we will from now on assume that $\mathrm{P}(\{\emptyset\})=0$ so that any Palm measure is uniquely associated with a stationary measure.

\begin{example}
	One of the most striking application of Campbell's inversion formula concerns simple point processes on the real line, see \cite[Theorem 13.3.I]{Daley2008}. Assume $\mathrm{P}$ is supported on $\mathrm{N}_{s}(\R)$ and write $\xi=\{\xi_{n}:n\in \Z\}$ with the conventions
	\begin{itemize}
		\item $\xi_{-1}<0\leq\xi_{0}$,
		\item $\xi_{n}<\xi_{n+1}$ for all $n\in \Z$,
	\end{itemize}
	for $\xi \in \mathrm{N}_{s}(\R)$. Then 
	\begin{equation}\label{eq:inverse_palm_real_line}
		\mathrm{P}(A)=\int_{\mathrm{N}_{s}}\int_{0}^{\xi_{1}}\mathds{1}(\tau_{-t}\xi \in A) \, \mathrm{d}t\, \mathrm{P}_{0}(\mathrm{d}\xi).
	\end{equation}
\end{example}

Finally, one of the main advancements in the field of Palm theory is a characterisation of Palm measures due to Mecke \cite{Mecke1967}.

\begin{theorem}[{\cite[Theorem 7.1]{Last2009}, \cite[Theorem 13.2.VIII]{Daley2008}}]\label{thm:palm_characterization}
	A measure $\mathrm{P}_{0}$ on $\mathrm{M}(G)$ is the Palm measure of a stationary measure $\mathrm{P}$ on $\mathrm{M}(G)$ if and only if
	\begin{enumerate}
		\item \label{enum:palm_}$\mathrm{P}_{0}$ is $\sigma$-finite,
		\item $\mathrm{P}_{0}(\{\emptyset\})=0$ where $\emptyset$ denotes the zero measure,
		\item the measure $\mathrm{Q}(\mathrm{d}\xi,\mathrm{d}t)=\mathrm{P}_{0}(\mathrm{d}\xi)\, \xi(\mathrm{d}t)$ is invariant with respect to 
		\begin{equation*}
			\rho : \left\{
			\begin{array}{cccc}
				&\mathrm{M}(G)\times G&\longrightarrow &\mathrm{M}(G)\times G\\
				&(\xi,t)&\longmapsto &(\tau_{t}\xi,-t)
			\end{array}
			\right..
		\end{equation*}
	\end{enumerate}
\end{theorem}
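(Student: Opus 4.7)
If $\mathrm{P}_0$ is the Palm measure of a stationary $\sigma$-finite $\mathrm{P}$, properties (1) and (2) are essentially built into the definition \eqref{eq:Palm_measure}: $\sigma$-finiteness is inherited from $\mathrm{P}$ via a countable exhaustion of $G$ by compacts together with the local finiteness of $\xi$, and $\mathrm{P}_0(\{\emptyset\}) = 0$ follows because the inner integral in \eqref{eq:Palm_measure} vanishes at the zero measure. For (3), I would unfold
$$\int f(\xi, t)\,\mathrm{Q}(d\xi, dt) = \int f(\tau_t\xi, -t)\,\mathrm{Q}(d\xi, dt)$$
by substituting \eqref{eq:Palm_measure} on both sides, using Fubini to push the counting-measure integration inside, and invoking stationarity of $\mathrm{P}$ to transfer the translation onto the $\mathrm{P}$-integration. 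The auxiliary factor $\int \omega\,d\lambda_G = 1$ then absorbs the redundant variable and produces the desired identity.

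\textbf{Backward direction.} Conversely, given $\mathrm{P}_0$ satisfying (1)--(3), I would define $\mathrm{P}$ by the inversion formula \eqref{eq:inverse_Palm_measure} with a fixed kernel $k$ and verify three items: (a) the construction does not depend on $k$, (b) $\mathrm{P}$ is stationary, and (c) the Palm measure of $\mathrm{P}$ is $\mathrm{P}_0$. For (a), writing $h = k_1 - k_2$ we have $\int h(\xi, t)\,\xi(dt) = 0$; combining this with the $\rho$-invariance of $\mathrm{Q}$ to pair the $\lambda_G$-integration against $\xi(dt)$ forces equality of the two inversion integrals. Item (b) follows from (a) by the translation invariance of $\lambda_G$: the substitution $t \mapsto t+s$ turns the shifted inversion integral into one involving $(\xi, t)\mapsto k(\tau_{-s}\xi, t+s)$, which is easily checked to be another admissible kernel.

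For item (c), the computation proceeds as follows. Starting from
$$\mathrm{Palm}(\mathrm{P})(A) = \int\int \omega(s)\,\mathds{1}(\tau_s \xi \in A)\,\xi(ds)\,\mathrm{P}(d\xi),$$
substitute the inversion formula for $\mathrm{P}$, use the identity $\int h(s)(\tau_{-t}\xi)(ds) = \int h(s+t)\,\xi(ds)$ to express the inner integral against $\xi(ds)$, and collect terms into $\int f(\xi, s)\,\mathrm{Q}(d\xi, ds)$ for an appropriate $f(\xi, s) = \mathds{1}(\tau_s\xi \in A)\,F(\xi, s)$. Apply $\rho$-invariance to transfer $\tau_s$ from the indicator onto $F$, then substitute $t \mapsto t+s$ in the remaining Haar integral to recognise the inner sum $\int k(\tau_{-t}\xi, t+s)\,\xi(ds) = \int k(\tau_{-t}\xi, u)\,(\tau_{-t}\xi)(du) = 1$ via \eqref{eq:kernel}. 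A final $\int \omega\,d\lambda_G = 1$ yields $\mathrm{P}_0(A)$; property (2) is used here to discard the fibre where $\xi = \emptyset$.

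\textbf{Main obstacle.} The decisive step is (c). It requires juggling three distinct measures ($\mathrm{P}_0$, the random $\xi(dt)$, and $\lambda_G$) interleaved with two substitutions of different flavours, one probabilistic (via the $\rho$-invariance hypothesis) and one deterministic (via invariance of Haar). The crux is unlocking the kernel identity \eqref{eq:kernel} and the normalisation $\int\omega\,d\lambda_G = 1$ in the correct order, since the counting-measure integration must be paired against Haar through a shift that was itself enabled by $\rho$-invariance. Once this choreography is arranged, everything else reduces to Fubini and careful bookkeeping of the translation conventions.
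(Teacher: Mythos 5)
The paper does not actually prove Theorem \ref{thm:palm_characterization}: it is Mecke's classical characterisation, quoted from \cite[Theorem 7.1]{Last2009} and \cite[Theorem 13.2.VIII]{Daley2008}, so there is no in-paper argument to compare yours against. Your reconstruction follows the standard route of those references and is sound in outline. In the forward direction, unfolding the defining identity \eqref{eq:Palm_measure} on both sides of the $\rho$-invariance identity produces a double $\xi$-integration against $\mathrm{P}$; the symmetry of that double integral in the two group variables, combined with stationarity of $\mathrm{P}$, invariance of $\lambda_{G}$ and the normalisation $\int_{G}\omega\,\mathrm{d}\lambda_{G}=1$, is exactly the classical computation. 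In the backward direction your scheme --- define $\mathrm{P}$ by the inversion formula, then check kernel-independence, stationarity, and that the Palm measure of the constructed $\mathrm{P}$ is $\mathrm{P}_{0}$ --- is the standard one, and the mechanisms you describe are correct: for kernel-independence, insert $\int k(\eta,s)\,\eta(\mathrm{d}s)=1$, apply $\rho$-invariance of $\mathrm{Q}$, shift the Haar variable, and conclude from $\int h(\eta,s)\,\eta(\mathrm{d}s)=0$; for the final verification, push the inner $\xi(\mathrm{d}s)$-integration through $\rho$-invariance and then translate the Haar integral so that \eqref{eq:kernel} and $\int\omega\,\mathrm{d}\lambda_{G}=1$ fire in that order.

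Two steps are thinner than the rest. First, $\sigma$-finiteness of $\mathrm{P}_{0}$ in the forward direction: ``inherited from $\mathrm{P}$ via a compact exhaustion'' is not quite enough when $\mathrm{P}$ is only $\sigma$-finite, since the natural bound $\mathrm{P}_{0}\left(\{\xi : \xi(K)\leq n\}\right)\leq Cn\,\mathrm{P}(\mathrm{M})$ is vacuous for infinite $\mathrm{P}$; one must interleave the compact exhaustion of $G$ with a $\sigma$-finite exhaustion of $\mathrm{M}(G)$, as is done carefully in \cite{Last2009}. Second, in the backward direction you should also record that a kernel $k$ satisfying \eqref{eq:kernel} exists and that the constructed $\mathrm{P}$ is itself $\sigma$-finite, so that its Palm measure is defined at all. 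Both points are standard and neither invalidates the approach.
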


For practical applications, the third condition can be restated in an integral form, namely 
\begin{equation*}
	\int_{\mathrm{M}}\int_{G}f(\tau_{t}\xi,-t) \, \xi(\mathrm{d}t)\, \mathrm{P}_{0}(\mathrm{d}\xi)=\int_{\mathrm{M}}\int_{G}f(\xi,t)\, \xi(\mathrm{d}t)\, \mathrm{P}_{0}(\mathrm{d}\xi)
\end{equation*}
for any positive measurable function $f : \mathrm{M}(G)\times G\longrightarrow \R$. Informally, this condition means that the stationarity of a measure $\mathrm{P}$ is equivalent to the invariance of its Palm measure $\mathrm{P}_{0}$ with respect to $\rho$. This statement is extremely surprising as it relates stationarity of a continuous dynamical system (translations) to the stationarity of a discrete dynamical system ($\rho$). This seemingly innocuous observation will be relevant when dealing with ergodicity of perturbed Palm measures in the subsequent section.\medbreak

We will finally conclude this introduction on Palm measures with an important and relevant observation, namely Palm measures preserve counting measures. In fact,
\begin{itemize}
	\item $\mathrm{P}$ is supported on $\mathrm{N}(G)$ if and only if  $\mathrm{P}_{0}$ is supported on $\mathrm{N}(G)$,
	\item $\mathrm{P}$ is supported on $\mathrm{N}_{s}(G)$ if and only if $\mathrm{P}_{0}$ is supported on $\mathrm{N}_{s}(G)$.
\end{itemize}

\subsection{Stochastic processes with stationary increments}\label{sec:stationary_increments}

Informally, the reader should view a stochastic process with stationary increments as a generalisation of a real Brownian motion $(B_{t})_{t\in \R}$, except it is defined in a topological group setting.\medbreak

Let $\mathrm{F}(G)$, or simply $\mathrm{F}$ be the space of (continuous) functions $B : G \longrightarrow G$ satisfying 
\begin{enumerate}[label=(\text{C\arabic*})]
	\item \label{cond:C1}$B_{0}=0$,
	\item \label{cond:C2} $(\Id+B)\circ \xi$ is a finite locally measure for any $\xi \in \mathrm{M}(G)$, that is
	\begin{equation*}
		\int_{G} \mathds{1}_{K}(t+B_{t})\, \xi(\mathrm{d}t)<+\infty, \quad \text{$K \subset G$ compact}.
	\end{equation*}
\end{enumerate}

Here, $(\Id +B)\circ \xi$ denotes the push-forward measure of $\xi$ by $\Id+B$.\medbreak

\begin{remark}
	When defining a process with stationary increments, condition \ref{cond:C2} is irrelevant. Yet, it will prove useful later on when introducing perturbed Palm measures as it will ensure the latter are well-defined. For now, it can be omitted.
\end{remark}

We equip $\mathrm{F}(G)$ with its cylindrical algebra $\mathcal{F}(G)$ and say that a $\sigma$-finite measure $\nu$ on $\mathrm{F}(G)$ is 
\begin{itemize}
	\item \textbf{weakly-stationary} if its distribution has \textbf{stationary increments} in the sense that 
	\begin{equation*}
		\theta_{t} \circ \nu = \nu, \quad t\in G,
	\end{equation*}
	where
	\begin{equation*}
		\theta_{t} : \left\{
		\begin{array}{cccc}
			&\mathrm{F}(G)&\longrightarrow&\mathrm{F}(G)\\
			&B&\longmapsto&\quad (B_{t+s}-B_{t})_{s\in G}
		\end{array}
		\right., \quad t\in G.
	\end{equation*}
	\item \textbf{ergodic} if $\nu$ is weakly-stationary, and if either $\nu(A)=0$ or $\nu({}^{c}A)=0$ for any $\mathbf{\theta}$\textbf{-invariant} event $A\in \mathcal{F}(G)$ in the sense
	\begin{equation*}
		\theta_{t}A=A, \quad t\in G.
	\end{equation*}
\end{itemize}
As before, $\nu$ might not be necessarily a probability measure.

\begin{notation*}
	If $B$ is a random variable with weakly-stationary distribution $\nu$, we say that $B$ has stationary increments.
\end{notation*}

\begin{example}
	Let $\nu = \mathcal{L}(B)$ be the distribution of a Brownian motion. As a Brownian motion has continuous trajectories and stationary increments, \textit{i.e.} the distribution of $(B_{s+t}-B_{s})_{t\in \R}$ does not depend on $t$, $\nu$ is weakly-stationary. 
\end{example}

Before concluding this section, we make a few remarks on the definition. First, we assume $B$ to be continuous even though there are numerous known examples of stochastic processes with stationary increments that have non-continuous sample paths, say a Levy process with jumps. This is a slight inconvenience, but the reader should be aware that continuity is mostly a technical assumption, and we firmly believe most results of the upcoming sections still hold true if the latter is relaxed.\medbreak

Finally, when it comes to practical applications, condition \ref{cond:C2} is not really tractable. In $\R^{d}$, the latter can be replaced with the (weaker) assumption
\begin{enumerate}[resume,label=(\text{C\arabic*})]
	\item \label{cond:sub-linear}$B$ has sub-linear growth, \textit{i.e.}
	\begin{equation*}
		\lim_{\|t\|\to +\infty}\frac{\|B_{t}\|}{\|t\|} < 1.
	\end{equation*}
\end{enumerate}

We will henceforth denote by $\mathrm{F}_{0}(\R)$ the sets of functions $B:G\longrightarrow G$ satisfying conditions \ref{cond:C1} and \ref{cond:sub-linear}. We let the reader check by himself that $\mathrm{F}_{0}(\R^{d})$ is a subset of $\mathrm{F}(\R^{d})$.

\section{Perturbation of Palm measures}\label{sec:perturbation_palm}

In this section, we introduce the main objects of this article, namely perturbed Palm measures. Those were briefly introduced in the introduction for point processes on the real line. Here we extend the definition to random measures in an Abelian topological group.

\subsection{Perturbed Palm measures}

\subsubsection{Definition and main theorem}

Let 
\begin{equation}\label{eq:Gamma}
	\Gamma : \left\{
	\begin{array}{cccc}
		&\mathrm{M}(G)\times \mathrm{F}(G)&\longrightarrow& \mathrm{M}(G)\\
		&(\xi,B)&\longmapsto&(\Id+B)\circ \xi
	\end{array}
	\right..
\end{equation}

More explicitly, $\Gamma(\xi,B)$ is the locally finite Borel measure $\xi_{B}$ defined by 
\begin{equation*}
	\xi_{B}(\varphi) = \int \varphi(t+B_{t}) \,\xi(\mathrm{d}t)
\end{equation*}
and we will often prefer the notation $\xi_{B}$ to $\Gamma(\xi,B)$ as it is less cumbersome.\medbreak

If
\begin{itemize}
	\item $\mathrm{P}_{0}$ is a measure on $\mathrm{M}(G)$,
	\item $\nu$ is a measure on $\mathrm{F}(G)$,
\end{itemize}	
we denote by $\mathrm{P}_{0}\otimes \nu$ the product measure of $\mathrm{P}_{0}$ and $\nu$, and
\begin{equation*}
	\mathrm{P}_{0}^{\nu} = \Gamma \circ (\mathrm{P}_{0}\otimes \nu).
\end{equation*}
the push-forward of $\mathrm{P}_{0}\otimes \nu$ by $\Gamma$. \medbreak

We will now state the main result of this article, namely
\begin{theorem}[Perturbed Palm measures]\label{thm:Palm_perturbation}
	Let 
	\begin{itemize}
		\item $\mathrm{P}_{0}$ be the Palm measure of a stationary measure $\mathrm{P}$ on $\mathrm{M}(G)$,
		\item $\nu$ a weak-stationary measure on $\mathrm{F}(G)$,
	\end{itemize}
	
	If $\mathrm{P}_{0}^{\nu}$ is $\sigma$-finite, then there exists a $\sigma$-finite stationary measure $\mathrm{P}^{\nu}$ on $\mathrm{M}(G)$ so that $\mathrm{P}_{0}^{\nu}$ is the Palm measure of $\mathrm{P}^{\nu}$.
\end{theorem}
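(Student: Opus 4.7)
My strategy is to apply Mecke's characterisation, Theorem~\ref{thm:palm_characterization}, directly to $\mathrm{P}_0^\nu$; this yields the desired stationary $\mathrm{P}^\nu$ without having to construct it explicitly (inversion~\eqref{eq:inverse_Palm_measure} would then give an explicit expression if needed). The first hypothesis of that theorem is assumed. The second follows from the fact that $\Gamma(\eta,B)=\emptyset \Longleftrightarrow \eta=\emptyset$, so that
\[
\mathrm{P}_0^\nu(\{\emptyset\}) = (\mathrm{P}_0 \otimes \nu)(\{\emptyset\}\times \mathrm{F}(G)) = 0
\]
since $\mathrm{P}_0(\{\emptyset\})=0$. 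The whole content of the proof is therefore the verification of the third ($\rho$-invariance) condition.

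The conceptual heart of the argument is a compatibility identity between $\tau$, $\theta$ and $\Gamma$: for every $(\eta,B,s)\in \mathrm{M}(G)\times \mathrm{F}(G)\times G$,
\[
\tau_{s+B_s}\Gamma(\eta,B) = \Gamma(\tau_s\eta,\theta_s B).
\]
Informally, recentring the perturbed measure at $s+B_s$, the image under $\Id+B$ of an atom $s$ of $\eta$, amounts to perturbing the recentred original $\tau_s\eta$ by the recentred noise $\theta_s B$. I would verify it by a short direct computation on the atoms of $\eta$.

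With this identity, I would expand both sides of the integral form of $\rho$-invariance
\[
\int f(\tau_t\xi,-t)\, \xi(\mathrm{d}t)\, \mathrm{P}_0^\nu(\mathrm{d}\xi) = \int f(\xi,t)\, \xi(\mathrm{d}t)\, \mathrm{P}_0^\nu(\mathrm{d}\xi)
\]
by unfolding $\mathrm{P}_0^\nu$ as a pushforward of $\mathrm{P}_0\otimes\nu$ under $\Gamma$. The left-hand side becomes the triple integral of $f(\Gamma(\tau_s\eta,\theta_s B),-(s+B_s))$ against $\eta(\mathrm{d}s)\,\nu(\mathrm{d}B)\,\mathrm{P}_0(\mathrm{d}\eta)$. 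Placing $\nu$ outermost by Fubini, I would apply the $\rho$-invariance of $\mathrm{P}_0(\mathrm{d}\eta)\,\eta(\mathrm{d}s)$ with test function $h_B(\eta',t) = f(\Gamma(\eta',\theta_{-t}B),t-B_{-t})$ to rewrite the inner integral, yielding the triple integral of $f(\Gamma(\eta,\theta_{-t}B),t-B_{-t})$ against $\eta(\mathrm{d}t)\,\mathrm{P}_0(\mathrm{d}\eta)\,\nu(\mathrm{d}B)$. Finally, for each fixed $(\eta,t)$, I would invoke the stationary-increments property $\theta_{-t}\circ\nu=\nu$ together with the elementary identity $(\theta_{-t}B)_t=-B_{-t}$, which converts the trailing $t-B_{-t}$ into $t+B_t$, to collapse the expression to the right-hand side.

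The only real difficulty is the notational bookkeeping of the two simultaneous translations — one acting on $\eta$, one on $B$ — and of the trailing scalar argument; conceptually, the proof contains a single idea, the compatibility identity above, coupled with one application each of Mecke's swap and of the stationary-increments property of $\nu$. All Fubini permutations are justified by the assumed $\sigma$-finiteness of $\mathrm{P}_0$, $\nu$, and $\mathrm{P}_0^\nu$.
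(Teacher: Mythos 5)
Your proposal is correct and follows essentially the same route as the paper: verify Mecke's characterisation for $\mathrm{P}_0^\nu$, with the whole weight carried by the compatibility identity $\tau_{s+B_s}\Gamma(\eta,B)=\Gamma(\tau_s\eta,\theta_s B)$ together with the elementary increment identity (the paper's Lemma \ref{lem:technical_lemma_Palm_perturbation}), one application of the $\rho$-invariance of $\mathrm{Q}(\mathrm{d}\eta,\mathrm{d}t)=\mathrm{P}_0(\mathrm{d}\eta)\,\eta(\mathrm{d}t)$, and one application of the weak stationarity of $\nu$. The only cosmetic difference is that you apply Mecke's swap before the stationary-increments substitution while the paper does it in the opposite order; this changes nothing of substance.
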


In the introduction, we teased that Palm measures were preserved by stochastic processes with stationary increments. Theorem \ref{thm:Palm_perturbation} is a reformulation of this idea in a more abstract setting. \medbreak

We now extend the conclusion of Theorem \ref{thm:Palm_perturbation} to ergodic stationary measures. Say 

\begin{itemize}
	\item $A\in \mathcal{M}(G)\otimes\mathcal{F}(G)$ is $\mathbf{(\tau, \theta)}$\textbf{-invariant} if 
	\begin{equation*}
		(\tau_{t},\theta_{t})A = A, \quad t\in G,
	\end{equation*}
	\item$(\mathrm{P},\nu)$ are  \textbf{jointly-ergodic} if either $\mathrm{P}\otimes\nu(A)=0$ or $\mathrm{P}\otimes\nu({}^{c}A)=0$ for any $(\tau, \theta)$-invariant event $A$.
\end{itemize}

\begin{theorem}\label{thm:Palm_perturabtion_ergodicity}
	Under the assumptions of Theorem \ref{thm:Palm_perturbation}, $\mathrm{P}^{\nu}$ is ergodic if $(\mathrm{P},\nu)$ are jointly-ergodic.
\end{theorem}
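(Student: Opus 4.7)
The plan is to translate any $\tau$-invariant event in $\mathcal{M}(G)$ into a $(\tau,\theta)$-invariant event in $\mathcal{M}(G)\otimes\mathcal{F}(G)$, where the joint-ergodicity hypothesis applies, and then to transport the resulting dichotomy back to $\mathrm{P}^\nu$ via the Palm inversion formula. Given a $\tau$-invariant $A \in \mathcal{M}(G)$, set $\tilde A = \Gamma^{-1}(A) = \{(\xi, B) : \xi_B \in A\}$. A direct computation on the atoms yields the cocycle identity
\[
	\Gamma(\tau_t \xi, \theta_t B) = \tau_{t+B_t}(\xi_B), \quad t \in G,
\]
so $\tau$-invariance of $A$ translates into $(\tau,\theta)$-invariance of $\tilde A$. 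By joint ergodicity of $(\mathrm{P}, \nu)$, either $(\mathrm{P}\otimes\nu)(\tilde A) = 0$ or $(\mathrm{P}\otimes\nu)({}^c \tilde A) = 0$.

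The core step is to transfer this dichotomy from $\mathrm{P}\otimes\nu$ to $\mathrm{P}_0\otimes\nu$. Fix $\omega : G \longrightarrow \R_+$ with $\int_G \omega\, \lambda_G(\mathrm{d}t) = 1$. Applying the Campbell--Mecke formula (\ref{eq:Palm_measure}) to the function $\xi \mapsto \nu(\{B : (\xi, B)\in \tilde A\})$, using $(\tau,\theta)$-invariance of $\tilde A$ to rewrite $\{B : (\tau_t \xi, B)\in \tilde A\} = \theta_t(\{B : (\xi, B)\in \tilde A\})$, and finally absorbing this $\theta_t$ through weak-stationarity of $\nu$, a Fubini interchange produces
\[
	(\mathrm{P}_0 \otimes \nu)(\tilde A) = \int_{\tilde A} \xi(\omega)\, (\mathrm{P}\otimes\nu)(\mathrm{d}\xi, \mathrm{d}B),
\]
with $\xi(\omega) := \int_G \omega(t)\,\xi(\mathrm{d}t) \geq 0$. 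Consequently $(\mathrm{P}\otimes\nu)(\tilde A) = 0$ forces $(\mathrm{P}_0\otimes\nu)(\tilde A) = 0$, and likewise for ${}^c\tilde A$.

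To conclude, I would apply the Palm inversion formula (\ref{eq:inverse_Palm_measure}) to $\mathrm{P}^\nu$, together with $\tau$-invariance of $A$ and the identity $\mathrm{P}_0^\nu = \Gamma\circ(\mathrm{P}_0\otimes\nu)$, to obtain
\[
	\mathrm{P}^\nu(A) = \int_{\tilde A} K(\xi_B)\, (\mathrm{P}_0 \otimes \nu)(\mathrm{d}\xi, \mathrm{d}B), \qquad K(\xi') := \int_G k(\tau_{-t}\xi', t)\,\lambda_G(\mathrm{d}t),
\]
and similarly for ${}^c A$ (noting $\widetilde{{}^c A} = {}^c \tilde A$). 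The preceding dichotomy then forces $\mathrm{P}^\nu(A) = 0$ or $\mathrm{P}^\nu({}^c A) = 0$, which is ergodicity. The main obstacle lies in the middle paragraph: the shift $\tau_t$ created by Campbell--Mecke has to be absorbed by combining the joint $(\tau,\theta)$-invariance of $\tilde A$ with weak-stationarity of $\nu$, which is precisely where the stationary-increments hypothesis on $\nu$ (rather than mere stationarity of $\nu$ itself) enters decisively.
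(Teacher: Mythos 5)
Your argument is correct and follows essentially the same route as the paper: the cocycle identity $\Gamma(\tau_t\xi,\theta_t B)=\tau_{t+B_t}(\xi_B)$ to show $\Gamma^{-1}(A)$ is $(\tau,\theta)$-invariant, the Campbell formula to transfer the joint-ergodicity dichotomy from $\mathrm{P}\otimes\nu$ to $\mathrm{P}_0\otimes\nu$ (the paper's Lemma \ref{lem:joint_ergodicity}), and the inversion formula to pass from $\mathrm{P}_0^{\nu}$ back to $\mathrm{P}^{\nu}$ (the paper's Proposition \ref{prop:Palm_ergodicity}). You merely inline, in the one direction needed, what the paper factors out into those two auxiliary results.
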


As the formulation of the preceding results might not be too enlightening, we will proceed to reformulate them in a probabilistic framework. Let $\xi$, $\smash{\hat{\xi}}$ and $B$ be mutually independent random variables such that
\begin{itemize}
	\item $\xi$ is stationary with distribution $\mathrm{P}$,
	\item $\hat{\xi}$ has distribution $\mathrm{P}_{0}$,
	\item $B$ is a stochastic process with stationary increments and distribution $\nu$,  
\end{itemize}
then $\hat{\xi}_{B}=\Gamma(\hat{\xi},B)$ is the Palm distribution of a stationary random measure on $G$. Moreover, if $\xi$ and $B$ are jointly ergodic, then so is the perturbed Palm measure.

\begin{remark}
	We finish this section with some technical remarks related to the definition of $\Gamma$.\medbreak
	
	First, for $\Gamma$ to be well-defined, we used implicitly condition \ref{cond:C2}. If we remove the latter, $\xi_{B}$ would still be a Borel measure, but it would not be necessarily locally finite, see Example \ref{ex:counter_example_locally_finite}.\medbreak
	
	Second, the well-definiteness of $\Gamma$ is not sufficient to define properly $\mathrm{P}_{0}^{\nu}$. Instead, we need $\Gamma$ to be measurable. We shall not delve extensively into this question and refer the reader to Appendix \ref{app:compact-open} for a proof of this fact.
\end{remark}

\begin{example}\label{ex:counter_example_locally_finite}
	Let $\mathrm{d}x$ denote the Lebesgue measure on $\R^{d}$, and define $B_{x}=-x$. Then a straightforward computation shows that $\Gamma(\mathrm{d}x,B)$ is a Dirac measure with infinite mass concentrated at the origin. Hence, it is not locally finite. This example motivates the introduction of condition \ref{cond:C2}.
\end{example}

\subsubsection{Counting measures}

Theorem \ref{thm:Palm_perturbation} can be specialised to counting measures, observing that, if
\begin{equation*}
	\xi = \sum_{n\in \Z}\mu_{n}\delta_{\xi_{n}}
\end{equation*} 
is a counting measure, then
\begin{equation*}
	\xi_{B}=\sum_{n\in \Z}\mu_{n}\delta_{\xi_{n}+B_{\xi_{n}}}.
\end{equation*} 
is also a counting measure. Therefore,

\begin{corollary}\label{cor:palm_perturbation_counting_measure}
	Under the assumptions of Theorem \ref{thm:Palm_perturbation}, if $\mathrm{P}$ is supported on $\mathrm{N}(G)$, then so is $\mathrm{P}^{\nu}$.
\end{corollary}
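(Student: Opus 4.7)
The plan is to chain two elementary facts together with Theorem~\ref{thm:Palm_perturbation}. First, the last paragraph of Subsection~\ref{sec:stationarity_palm} records the equivalence that a stationary measure $\mathrm{P}$ is supported on $\mathrm{N}(G)$ if and only if its Palm measure $\mathrm{P}_{0}$ is. Second, the explicit formula $\xi_{B}=\sum_{n\in\Z}\mu_{n}\delta_{\xi_{n}+B_{\xi_{n}}}$ displayed immediately before the statement shows that $\Gamma$ maps $\mathrm{N}(G)\times\mathrm{F}(G)$ into $\mathrm{N}(G)$, regardless of the perturbation $B$.

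The argument then proceeds in three short steps. First, since $\mathrm{P}$ is supported on $\mathrm{N}(G)$, so is $\mathrm{P}_{0}$, and thus $\mathrm{P}_{0}\otimes\nu$ is supported on $\mathrm{N}(G)\times\mathrm{F}(G)$. Second, pushing forward along $\Gamma$ and using the stability of $\mathrm{N}(G)$ under $\Gamma$ gives
\begin{equation*}
	\mathrm{P}_{0}^{\nu}\bigl(\mathrm{M}(G)\setminus\mathrm{N}(G)\bigr)=(\mathrm{P}_{0}\otimes\nu)\bigl(\Gamma^{-1}(\mathrm{M}(G)\setminus\mathrm{N}(G))\bigr)=0.
\end{equation*}
Third, Theorem~\ref{thm:Palm_perturbation} identifies $\mathrm{P}_{0}^{\nu}$ as the Palm measure of the stationary measure $\mathrm{P}^{\nu}$, and applying the equivalence in the reverse direction concludes that $\mathrm{P}^{\nu}$ itself is supported on $\mathrm{N}(G)$.

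There is no substantial obstacle here: the only technical point worth mentioning is that $\mathrm{N}(G)$ is a measurable subset of $\mathrm{M}(G)$, but this follows from standard arguments using the $\sigma$-algebra generated by the mappings $\Lambda_{K}$ introduced in Section~\ref{sec:definion}. The corollary is essentially bookkeeping on top of Theorem~\ref{thm:Palm_perturbation}, and one should note that simplicity is \emph{not} preserved in general since two distinct atoms of $\xi$ may be mapped to the same location by $\mathrm{Id}+B$.
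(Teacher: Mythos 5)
Your proposal is correct and follows essentially the same route as the paper: the observation that $\Gamma$ sends counting measures to counting measures, combined with the stated equivalence that $\mathrm{P}$ is supported on $\mathrm{N}(G)$ if and only if $\mathrm{P}_{0}$ is, applied in both directions around Theorem~\ref{thm:Palm_perturbation}. You merely make explicit the bookkeeping (measurability of $\mathrm{N}(G)$, the pushforward identity for the null set) that the paper leaves implicit.
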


The preceding corollary is no longer true for simple counting measures as $\xi_{B}$ does not remain necessarily simple. As a counter-example, consider $G$ a finite group, $\xi = G$ and $B_{x}=-x$. A straightforward computation shows that $\xi_{B} =\card{G} \delta_{0}$, so that $\xi_{B}$ is never simple if $G$ has more than $1$ element.\medbreak

In what follows, we will investigate a sufficient condition ensuring $\mathrm{P}^{\nu}$ to be supported on $\mathrm{N}_{s}(G)$. This will strengthen the conclusion of Corollary \ref{cor:atomic_measure} to simple counting measures.\medbreak

Say a weakly-stationary measure $\nu$ is \textbf{pointwise non-atomic} if $\nu_{t}=\pi_{t}\circ \nu$ has no atom for all $t\neq 0$ where
\begin{equation*}
	\pi_{t} : \left\{
	\begin{array}{cccc}
		&\mathrm{F}&\longrightarrow&G\\
		&B&\longmapsto&B_{t}
	\end{array}
	\right.,
\end{equation*} 
In probabilistic terms, if $B$ has distribution $\nu$, then $B$ is pointwise non-atomic if for all $t\neq 0$,
\begin{equation*}
	\nu(B_{t}=x)=0, \quad x\in G.
\end{equation*}
Note that for $t=0$, $B_{0}=0$ $\nu$-almost surely, hence $B_{0}$ is always atomic.
\begin{corollary}\label{cor:atomic_measure}
	Let $\mathrm{P}_{0}$ and $\nu$ as in Theorem \ref{thm:Palm_perturbation} with the additional conditions that
	\begin{enumerate}
		\item $\mathrm{P}_{0}$ is supported on $\mathrm{N}_{s}(G)$,
		\item $\nu$ is pointwise non-atomic.
	\end{enumerate}
	Then $\mathrm{P}^{\nu}$ is supported on $\mathrm{N}_{s}(G)$.
\end{corollary}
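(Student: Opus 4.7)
My plan is to transfer the statement to the Palm side and then detect non-simplicity through a single double-integral functional. By Theorem \ref{thm:Palm_perturbation}, $\mathrm{P}_0^{\nu}$ is the Palm measure of $\mathrm{P}^{\nu}$, and the equivalence recalled at the end of Section \ref{sec:stationarity_palm} guarantees that $\mathrm{P}^{\nu}$ is supported on $\mathrm{N}_s(G)$ if and only if $\mathrm{P}_0^{\nu}$ is. Since $\mathrm{P}_0^{\nu} = \Gamma \circ (\mathrm{P}_0\otimes \nu)$, it is therefore enough to prove that for $(\mathrm{P}_0\otimes \nu)$-almost every $(\xi,B)$, the perturbed measure $\xi_B = \sum_{n\in \Z}\delta_{\xi_n + B_{\xi_n}}$ is simple, bearing in mind that the atoms $\xi_n$ are already pairwise distinct by the hypothesis on $\mathrm{P}_0$.

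The next step is to introduce the non-negative functional
\begin{equation*}
	N(\xi,B) := \int_{G}\int_{G} \mathds{1}(s\neq t)\, \mathds{1}(s + B_s = t + B_t)\, \xi(\mathrm{d}s)\, \xi(\mathrm{d}t),
\end{equation*}
which, because $\xi$ itself is simple, vanishes exactly when the displaced atoms $\xi_n + B_{\xi_n}$ are pairwise distinct, and to show $\int N\, \mathrm{d}(\mathrm{P}_0\otimes \nu) = 0$. Swapping the integrations via Tonelli, the problem reduces to controlling the inner quantity
\begin{equation*}
	\int_{\mathrm{F}} \mathds{1}(B_s - B_t = t - s)\, \nu(\mathrm{d}B)
\end{equation*}
for each fixed pair $s\neq t$. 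Using the identity $\pi_{s-t}\circ \theta_t(B) = B_{(s-t)+t} - B_t = B_s - B_t$ together with $\theta_t\circ \nu = \nu$, the push-forward of $\nu$ by $B \mapsto B_s - B_t$ coincides with $\nu_{s-t}$. The inner integral therefore equals $\nu_{s-t}(\{t-s\})$, which vanishes because $s-t\neq 0$ and $\nu$ is pointwise non-atomic. Integrating back yields $\int N\, \mathrm{d}(\mathrm{P}_0\otimes \nu) = 0$, hence $N(\xi,B) = 0$ and $\xi_B \in \mathrm{N}_s(G)$ for $(\mathrm{P}_0\otimes \nu)$-almost every $(\xi,B)$, as required.

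The main obstacle I foresee is technical rather than structural: verifying that $N$ is jointly measurable in $(\xi,B)$ and that Tonelli genuinely applies in this $\sigma$-finite, not necessarily probabilistic, framework. Joint measurability follows from the measurability of $\Gamma$ established in Appendix \ref{app:compact-open} together with the measurability of the evaluation map $(s,B)\mapsto B_s$ under the cylindrical $\sigma$-algebra $\mathcal{F}(G)$. As for $\sigma$-finiteness, it is built into the definition of a Palm measure for $\mathrm{P}_0$ via Theorem \ref{thm:palm_characterization}, is assumed for $\nu$, and is automatic for $\xi$ since any locally finite measure on the second-countable locally compact group $G$ is $\sigma$-finite. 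Once these routine verifications are written out, the computation sketched above closes the argument.
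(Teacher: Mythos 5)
Your proposal is correct and follows essentially the same route as the paper: the paper also reduces to showing that the functional $N(\xi,B)$ counting coinciding displaced atoms (written there via the second factorial measure $\xi^{\otimes_{2!}}$, which is exactly your $\mathds{1}(s\neq t)\,\xi(\mathrm{d}s)\,\xi(\mathrm{d}t)$) has zero integral against $\mathrm{P}_0\otimes\nu$, using the same increment-stationarity identity to rewrite $B_s-B_t$ as a marginal $\nu_{s-t}$ and then invoking pointwise non-atomicity. The only cosmetic difference is that the paper shifts by $\theta_s$ where you shift by $\theta_t$, and it does not spell out the measurability and Tonelli justifications you flag.
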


We conclude this paragraph with an example.

\begin{example}
	Let $B$ be a Brownian motion. $B$ is pointwise non-atomic as $B_{t}$ is absolutely continuous with respect to Lebesgue measure for any $t\neq 0$. As a consequence, the process
	\begin{equation*}
		\Z_{B}=\{n+B_{n} : n \in \Z\}
	\end{equation*}
	is the Palm measure of a stationary simple point process.
\end{example}

\subsubsection{A probabilistic interpretation} \label{sec:probabilistic_interpretration}

So far, we introduced perturbed Palm measures in a measure-theoretical framework and we deliberately avoided any reference to probability spaces. There is a legitimate reason as perturbed Palm measures do not preserve probabilities. More precisely, $\mathrm{P}^{\nu}$ has no reason to be a probability measure even if both $\mathrm{P}$ and $\nu$ is are. Worse, as underlined by Proposition \ref{prop:counter_example_probability}, $\mathrm{P}^{\nu}$ can be infinite even if $\mathrm{P}$ and $\nu$ are both finite.\medbreak

In the rest of this section, the goal is to derive conditions on both $\mathrm{P}$ and $\nu$ so that the preceding situation does not occur. In fact, we simply need $\mathrm{P}^{\nu}$ to be finite so that it can be renormalised into a probability measure. \medbreak 

In virtue of Campbell's inversion formula \eqref{eq:inverse_Palm_measure}, $\mathrm{P}^{\nu}$ is finite  if and only if there exists a kernel $k$ satisfying \eqref{eq:kernel} and so that
\begin{equation}\label{eq:finiteness_perturbed_palm_measure}
	\int_{G}k(\tau_{-t}\xi, t) \, \lambda_{G}(\mathrm{d}t)\, \mathrm{P}_{0}(\mathrm{d}\xi) <+\infty.
\end{equation}

We consider different scenarios ensuring \eqref{eq:finiteness_perturbed_palm_measure} holds. We don't claim any novelty as those criteria are already well-known in the literature. As before, proofs are deferred to Section \ref{sec:proofs}. \medbreak

We start with the easiest case, $G$ compact. 
\begin{proposition}\label{prop:compact_finite_perturbed_palm_measure}
	If $G$ is compact, then $\mathrm{P}^{\nu}$ is finite if and only if both $\mathrm{P}$ and $\nu$ are finite. Moreover,
	\begin{equation*}
		\mathrm{P}^{\nu}(\mathrm{M})=\mathrm{P}(\mathrm{M})\times \nu(\mathrm{F}).
	\end{equation*}
\end{proposition}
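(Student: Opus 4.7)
The plan is to apply the Palm inversion formula \eqref{eq:inverse_Palm_measure} to both $\mathrm{P}$ and $\mathrm{P}^{\nu}$ with a kernel tailored to the compact setting, and then compare the two expressions via the change of variables $\mathrm{P}_{0}^{\nu}=\Gamma\circ(\mathrm{P}_{0}\otimes\nu)$.

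The key observation is that, since $G$ is compact, $\xi(G)<+\infty$ for every $\xi\in\mathrm{M}(G)$, and $\xi(G)>0$ whenever $\xi\neq\emptyset$ (which is $\mathrm{P}_{0}$-almost sure since $\mathrm{P}_{0}(\{\emptyset\})=0$, and likewise $\mathrm{P}_{0}^{\nu}$-a.s.). I would therefore set
\begin{equation*}
    k(\xi,t)=\frac{1}{\xi(G)}\mathds{1}(\xi\neq\emptyset),
\end{equation*}
which is easily seen to satisfy the normalisation \eqref{eq:kernel}. The second ingredient is that $\xi(G)$ is invariant under the relevant dynamics: since $G+t=G$ for every $t\in G$, one has $\tau_{-t}\xi(G)=\xi(G)$; and since the map $s\mapsto s+B_{s}$ takes values in $G$, the push-forward satisfies $\xi_{B}(G)=\xi(G)$. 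Plugging this kernel into the Palm inversion formula applied to $A=\mathrm{M}$ yields the clean identities
\begin{equation*}
    \mathrm{P}(\mathrm{M})=\lambda_{G}(G)\int_{\mathrm{M}}\frac{1}{\xi(G)}\,\mathrm{P}_{0}(\mathrm{d}\xi),\qquad \mathrm{P}^{\nu}(\mathrm{M})=\lambda_{G}(G)\int_{\mathrm{M}}\frac{1}{\xi(G)}\,\mathrm{P}_{0}^{\nu}(\mathrm{d}\xi).
\end{equation*}

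The final step is the change of variables. Using that $\mathrm{P}_{0}^{\nu}$ is the push-forward of $\mathrm{P}_{0}\otimes\nu$ by $\Gamma$, together with $\xi_{B}(G)=\xi(G)$, the right-hand integral factorises:
\begin{equation*}
    \mathrm{P}^{\nu}(\mathrm{M})=\lambda_{G}(G)\int_{\mathrm{M}}\int_{\mathrm{F}}\frac{1}{\xi_{B}(G)}\,\nu(\mathrm{d}B)\,\mathrm{P}_{0}(\mathrm{d}\xi)=\nu(\mathrm{F})\cdot\mathrm{P}(\mathrm{M}).
\end{equation*}
This establishes the mass formula, and the finiteness equivalence follows at once (the degenerate cases $\mathrm{P}=0$ or $\nu=0$ being trivial, as they force $\mathrm{P}_{0}^{\nu}=0$ and hence $\mathrm{P}^{\nu}=0$).

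I do not expect a serious obstacle in this argument; the whole proof hinges on finding a kernel making the Palm inversion formula explicit, and the compactness of $G$ makes $k(\xi,t)=\xi(G)^{-1}$ a natural and admissible choice. The only mild subtlety is to notice that $\xi(G)$ is preserved by both $\tau_{t}$ and $\Gamma(\,\cdot\,,B)$, which is precisely what allows the inner integral in the Palm inversion formula to decouple from $t$ (yielding the factor $\lambda_{G}(G)$) and from $B$ (yielding the factor $\nu(\mathrm{F})$).
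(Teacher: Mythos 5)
Your proof is correct and follows essentially the same route as the paper: both arguments hinge on the kernel $k(\xi,t)=\xi(G)^{-1}$ (admissible because $G$ is compact) and the invariance $\xi_{B}(G)=\xi(G)=\tau_{-t}\xi(G)$, which lets the Palm inversion formula factorise into $\mathrm{P}(\mathrm{M})\nu(\mathrm{F})$. Your write-up is if anything slightly more careful than the paper's, in that you make the intermediate identity $\mathrm{P}(\mathrm{M})=\lambda_{G}(G)\int_{\mathrm{M}}\xi(G)^{-1}\,\mathrm{P}_{0}(\mathrm{d}\xi)$ and the degenerate cases explicit.
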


If $G$ is no longer compact, the preceding result no longer holds and $\mathrm{P}^{\nu}$ might be infinite even if $\mathrm{P}$ and $\nu$ are finite. This is the content of the following proposition, which is stated for $G=\Z$, but similar examples can be constructed in $\R^{d}$.

\begin{proposition}\label{prop:counter_example_probability}
	Let $G=\Z$, $\mathrm{P}_{0}=\delta_{\Z}$ be the Palm measure of the stationary lattice $\Z$, and $\nu$ be the distribution of a stochastic process $B$ with independent and identically distributed increments satisfying 
	\begin{itemize}
		\item $(B_{n})_{n\in \Z}$ is increasing.
		\item $\esp{B_{1}}=+\infty$
	\end{itemize}
	Then, $\mathrm{P}^{\nu}$ has infinite mass.
\end{proposition}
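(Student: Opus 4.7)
The strategy is to compute $\mathrm{P}^\nu(\mathrm{M})$ via Palm's inversion formula \eqref{eq:inverse_Palm_measure} specialised to $G=\Z$, and then to exhibit this mass as $1+\E_\nu[B_1]$, which is infinite by hypothesis.

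First, I would derive the $\Z$-analogue of the identity \eqref{eq:inverse_palm_real_line}. Since the Haar measure on $\Z$ is the counting measure, a convenient kernel to plug into \eqref{eq:inverse_Palm_measure} is
\begin{equation*}
	k(\xi,t) = \mathds{1}(t = \xi_{-1}),
\end{equation*}
which satisfies the normalisation \eqref{eq:kernel} because every $\xi \in \mathrm{N}_s(\Z)$ has a unique last strictly negative atom $\xi_{-1}$. On the support of any Palm measure one has $\xi_0 = 0$, and the atoms of $\tau_{-t}\xi$ are $\{\xi_n+t : n\in\Z\}$; a direct atom-counting shows that the largest strictly negative atom of $\tau_{-t}\xi$ equals $t$ precisely when $-\xi_1 \le t \le -1$ (so that the atom $\xi_0+t=t$ is strictly negative and the next one $\xi_1+t$ is non-negative), i.e.\ for exactly $\xi_1$ integer values of $t$. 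Substituting into \eqref{eq:inverse_Palm_measure} and evaluating at $A=\mathrm{M}$ yields the discrete analogue of \eqref{eq:inverse_palm_real_line},
\begin{equation*}
	\mathrm{P}(\mathrm{M}) = \int_{\mathrm{N}_s(\Z)} \xi_1 \, \mathrm{P}_0(\mathrm{d}\xi).
\end{equation*}

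Next, I would read off $\xi_1$ for a configuration drawn from $\mathrm{P}_0^{\nu} = \Gamma \circ (\delta_{\Z}\otimes \nu)$. By definition, such a configuration is $\{n+B_n : n\in\Z\}$ with $B\sim \nu$. Because $(B_n)_{n\in\Z}$ is increasing and $B_0=0$, the consecutive gaps satisfy $(n+1)+B_{n+1}-(n+B_n) = 1 + (B_{n+1}-B_n) \ge 1$, so the map $n\mapsto n+B_n$ is strictly increasing. Consequently $\xi_0 = 0 + B_0 = 0$ and $\xi_1 = 1 + B_1$.

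Combining the two displayed identities,
\begin{equation*}
	\mathrm{P}^\nu(\mathrm{M}) = \int \xi_1 \, \mathrm{P}_0^\nu(\mathrm{d}\xi) = \E_\nu[1 + B_1] = 1 + \E_\nu[B_1] = +\infty,
\end{equation*}
which is the claim. The only step demanding actual work is the derivation of the $\Z$-version of the Palm inversion formula in the first paragraph; once that formula is in hand, the hypotheses on $B$ (increasing, $B_0=0$, $\E[B_1]=+\infty$) plug in mechanically.
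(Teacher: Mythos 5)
Your proof is correct and follows essentially the same route as the paper: apply the Palm inversion formula, use the monotonicity of $B$ to identify $\xi_{0}=0$ and $\xi_{1}=1+B_{1}$ for the perturbed configuration, and conclude $\mathrm{P}^{\nu}(\mathrm{M})=1+\E[B_{1}]=+\infty$. The only difference is that the paper directly invokes the real-line formula \eqref{eq:inverse_palm_real_line}, whereas you derive its $\Z$-analogue with an explicit kernel --- a welcome extra detail given that the proposition is set on $G=\Z$.
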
	

In the rest of this paragraph, we will focus on $\R^{d}$ and simple counting measures. We will therefore assume that both $\mathrm{P}$ and $\mathrm{P}^{\nu}$ are supported on $\mathrm{N}_{s}(\R^{d})$, or equivalently, that both $\mathrm{P}_{0}$ and $\mathrm{P}_{0}^{\nu}$ are supported on $\mathrm{N}_{s}(\R^{d})$.\medbreak

The \textbf{Voronoi diagram} of a configuration of points $\xi$ is a partition of $\R^{d}$ consisting of convex polygonal cells
\begin{equation*}
	V_{x}(\xi)=\left\{t\in \R^{d} : |t-x|\leq \inf_{y \in \xi} |t-y|\right\}.
\end{equation*}
Alternatively, $t\in V_{x}(\xi)$ if $t$ is closer to $x$ than any other $y \in \xi$.\medbreak

Note that $\mathrm{P}_{0}(0\not\in \xi)=0$ so that $0$ is $\mathrm{P}_{0}$-almost surely an atom of $\xi$. Now the volume of $0$-Voronoi cell of $\xi$ is intrinsically related to the mass of $\mathrm{P}$ via the equation \cite[Equation 13.3.9b]{Daley2008}
\begin{equation}\label{eq:voronoi_cell}
	\mathrm{P}(\mathrm{N}_{s})=\int_{\mathrm{N}_{s}}\Vol{V_{0}(\xi)}\, \mathrm{P}_{0}(\mathrm{d}\xi).
\end{equation}

Consequently, $\mathrm{P}$ is finite if and only if the expected volume of the $0$-Voronoi is finite. For Palm perturbed measures, this translates into

\begin{proposition}\label{prop:Voronoi_cell}
	If $\mathrm{P}_{0}^{\nu}$ is supported on $\mathrm{N}_{s}(\R^{d})$, then $\mathrm{P}^{\nu}$ is finite if and only if 
	\begin{equation*}
		\int_{\mathrm{N}_{s}}\int_{\mathrm{F}}\int_{\R^{d}} \Vol{V_{0}(\xi_{B})} \mathrm{P}_{0}(\mathrm{d}\xi)\,\nu(\mathrm{d}B)<+\infty.
	\end{equation*}
\end{proposition}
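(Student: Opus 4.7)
The plan is to apply the Voronoi cell mass formula \eqref{eq:voronoi_cell} directly to the perturbed Palm measure $\mathrm{P}_0^\nu$. By Theorem \ref{thm:Palm_perturbation}, $\mathrm{P}_0^\nu$ is the Palm measure of the stationary measure $\mathrm{P}^\nu$, and since $\mathrm{P}_0^\nu$ is assumed to be supported on $\mathrm{N}_s(\R^d)$, the one-to-one correspondence between simple counting supports noted at the end of Section \ref{sec:stationarity_palm} ensures that $\mathrm{P}^\nu$ is also supported on $\mathrm{N}_s(\R^d)$. In particular $0$ is $\mathrm{P}_0^\nu$-almost surely an atom of $\xi_B$, so that the Voronoi cell $V_0(\xi_B)$ is well-defined $\mathrm{P}_0\otimes \nu$-almost everywhere.

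Applying \eqref{eq:voronoi_cell} to the pair $(\mathrm{P}^\nu,\mathrm{P}_0^\nu)$ then yields
\begin{equation*}
\mathrm{P}^\nu(\mathrm{N}_s) = \int_{\mathrm{N}_s}\Vol{V_0(\xi)}\, \mathrm{P}_0^\nu(\mathrm{d}\xi).
\end{equation*}
By definition of the push-forward $\mathrm{P}_0^\nu = \Gamma\circ (\mathrm{P}_0\otimes \nu)$ and the transfer/change-of-variables formula, followed by Fubini--Tonelli (applicable since the integrand $\Vol{V_0(\cdot)}$ is nonnegative and the measures are $\sigma$-finite by assumption), this equals
\begin{equation*}
\int_{\mathrm{N}_s\times \mathrm{F}}\Vol{V_0(\Gamma(\xi,B))}\,(\mathrm{P}_0\otimes\nu)(\mathrm{d}\xi,\mathrm{d}B) = \int_{\mathrm{N}_s}\int_{\mathrm{F}}\Vol{V_0(\xi_B)}\, \nu(\mathrm{d}B)\, \mathrm{P}_0(\mathrm{d}\xi).
\end{equation*}
Finiteness of $\mathrm{P}^\nu$ is therefore equivalent to finiteness of the above integral, which is precisely the claimed criterion.

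There is no genuine obstacle here: the statement is essentially a corollary of \eqref{eq:voronoi_cell} combined with Theorem \ref{thm:Palm_perturbation} and the push-forward definition of $\mathrm{P}_0^\nu$. The only points to be careful about are (i) verifying that the simple-counting hypothesis on $\mathrm{P}_0^\nu$ lifts to $\mathrm{P}^\nu$ so that \eqref{eq:voronoi_cell} is applicable, and (ii) invoking Fubini--Tonelli correctly, which is immediate because the integrand is nonnegative and measurability of $(\xi,B)\mapsto \Vol{V_0(\xi_B)}$ follows from measurability of $\Gamma$ discussed in the remark before Example \ref{ex:counter_example_locally_finite}.
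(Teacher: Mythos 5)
Your argument is correct and is exactly the intended one: the paper treats this proposition as an immediate consequence of \eqref{eq:voronoi_cell} applied to the pair $(\mathrm{P}^{\nu},\mathrm{P}_{0}^{\nu})$ (legitimate by Theorem \ref{thm:Palm_perturbation} and the equivalence of supports), followed by the change of variables for the push-forward $\mathrm{P}_{0}^{\nu}=\Gamma\circ(\mathrm{P}_{0}\otimes\nu)$ and Tonelli. Your derivation also correctly lands on the double integral $\int_{\mathrm{N}_{s}}\int_{\mathrm{F}}\Vol{V_{0}(\xi_{B})}\,\nu(\mathrm{d}B)\,\mathrm{P}_{0}(\mathrm{d}\xi)$, confirming that the extra $\int_{\R^{d}}$ in the displayed criterion is a typographical slip.
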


Proposition \ref{prop:Voronoi_cell} provides an explicit criteria for perturbed Palm measures in $\R^{d}$ as long as there is a possibility of upper bounding the expected volume of the $0$-Voronoi cell of $\xi_{B}$. The latter might be hard in practice.

\subsection{Proofs}\label{sec:proofs}

\subsubsection{Proof of stationarity}

We now turn to the proof of Theorem \ref{thm:Palm_perturbation}. The latter is based on two fundamental and simple ideas which are summarised in the following lemma.

\begin{lemma}\label{lem:technical_lemma_Palm_perturbation}
	Let $\xi \in \mathrm{M}(G)$, $f\in \mathrm{F}(G)$ and $t\in G$. Then,
	\begin{equation}\label{eq:invariance}
		B_{t}=-\theta_{t}B_{-t}
	\end{equation}
	and 
	\begin{equation}\label{eq:compatibility_equation}
		\tau_{t+B_{t}} \xi_{B}=\Gamma(\tau_{t}\xi, \theta_{t}B)
	\end{equation}
\end{lemma}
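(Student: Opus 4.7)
Both identities are formal consequences of the definitions, so the plan is simply to unwind them in the right order.

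For \eqref{eq:invariance}, I would evaluate the definition $(\theta_t B)_s = B_{t+s} - B_t$ at $s=-t$, obtaining $(\theta_t B)_{-t} = B_0 - B_t = -B_t$, where the last equality is condition \ref{cond:C1}. Negating yields $B_t = -(\theta_t B)_{-t}$, which is the claim.

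For \eqref{eq:compatibility_equation}, both sides are locally finite measures on $G$, so it suffices to show they agree against an arbitrary nonnegative measurable $\varphi : G \to \R_+$. The translation convention, consistent with the Palm formula \eqref{eq:Palm_measure}, is $(\tau_u \mu)(\varphi) = \int_G \varphi(s - u)\, \mu(\mathrm{d}s)$ (equivalently, $\tau_u \delta_x = \delta_{x-u}$, so that $\tau_u$ moves an atom at $u$ to the origin). Combined with the pushforward formula $\xi_B(\psi) = \int_G \psi(u + B_u)\, \xi(\mathrm{d}u)$, the left-hand side of \eqref{eq:compatibility_equation} unfolds to
\begin{equation*}
(\tau_{t+B_t}\xi_B)(\varphi) \;=\; \int_G \varphi\bigl(u + B_u - t - B_t\bigr)\, \xi(\mathrm{d}u).
\end{equation*}
For the right-hand side, the definition of $\Gamma$ gives $\Gamma(\tau_t\xi,\theta_tB)(\varphi) = \int_G \varphi(s + (\theta_t B)_s)\, (\tau_t\xi)(\mathrm{d}s)$; the same translation convention combined with $(\theta_t B)_{u-t} = B_{t+(u-t)}-B_t = B_u - B_t$ reduces this to
\begin{equation*}
\int_G \varphi\bigl(u - t + B_u - B_t\bigr)\, \xi(\mathrm{d}u),
\end{equation*}
which coincides with the left-hand side.

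The argument is pure bookkeeping and presents no substantive obstacle; the only point one must be careful about is the sign convention for $\tau_u$, but this is pinned down by consistency with \eqref{eq:Palm_measure}. Structurally, \eqref{eq:compatibility_equation} expresses that re-centering the noise $B$ at $t$ via $\theta_t$ and re-centering the configuration $\xi$ at $t$ via $\tau_t$ are compatible through the perturbation map $\Gamma$, which is precisely the algebraic fact that will drive the verification of Mecke's condition in Theorem \ref{thm:palm_characterization} and hence Theorem \ref{thm:Palm_perturbation}.
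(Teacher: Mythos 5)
Your proposal is correct and follows essentially the same route as the paper: \eqref{eq:invariance} by evaluating $(\theta_t B)_s$ at $s=-t$ and using \ref{cond:C1}, and \eqref{eq:compatibility_equation} by unwinding both sides against a test function $\varphi$ with the convention $(\tau_u\mu)(\varphi)=\int_G\varphi(s-u)\,\mu(\mathrm{d}s)$. The only cosmetic difference is that you expand the two sides separately and match them, while the paper transforms the left-hand side into the right-hand side in a single chain of equalities.
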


\begin{proof}[Proof of Lemma \ref{lem:technical_lemma_Palm_perturbation}]
	With $B_{0}=0$, we have
	\begin{equation*}
		B_{t}=-(B_{-t+t}-B_{t})=-\theta_{t}B_{-t}.
	\end{equation*}
	For the second equation, it is derived from a short mathematical computation. Let $\varphi:G\longrightarrow\R_{+}$ be measurable, then
	\begin{align*}
		\tau_{t+B_{t}} \xi_{B}(\varphi)&=\int_{G}\varphi(s-t-B_{t})\, \xi_{B}(\mathrm{d}s)\\
		&= \int_{G}\varphi(s-t+B_{s}-B_{t}) \, \xi(\mathrm{d}s)\\
		&=\int_{G} \varphi(s-t+\theta_{t}B_{s-t})\, \xi(\mathrm{d}s)\\
		&=\int_{G} \varphi(s+\theta_{t}B_{s})) \, \tau_{t}\xi(\mathrm{d}s)\\
		&=\Gamma(\tau_{t} \xi,\theta_{t} f)(\varphi).
	\end{align*} 
	This finishes the proof.
\end{proof}

We are now ready to prove Theorem \ref{thm:Palm_perturbation}.

\begin{proof}[Proof of Theorem \ref{thm:Palm_perturbation}]
	It suffices to check that the conditions of Mecke's characterisation of Palm measures, see Theorem \ref{thm:palm_characterization}, are satisfied. The first two conditions are easily checked as $\mathrm{P}_{0}^{\nu}$ is assumed $\sigma$-finite and $\mathrm{P}_{0}^{\nu}(\{\emptyset\})=\mathrm{P}_{0}(\{\emptyset\})\nu(\mathrm{F}(G))=0$.\medbreak 
	
	For the third one, we introduce the measures
	\begin{equation*}
		\left\{\
		\begin{array}{ccc}
			\mathrm{Q}(\mathrm{d}\xi,\mathrm{d}t) &=&  \mathrm{P}_{0}(\mathrm{d}\xi)\,\xi(\mathrm{d}t)\\
			\mathrm{Q}^{\nu}(\mathrm{d}\xi,\mathrm{d}t)&=& \mathrm{P}_{0}^{\nu}(\mathrm{d}\xi)\, \xi(\mathrm{d}t)
		\end{array}
		\right.
	\end{equation*}
	By definition of $\mathrm{P}_{0}^{\nu}$, one has
	\begin{align}\label{eq:explicit_formula}
		\mathrm{M}^{\nu}(\varphi)=\int_{\mathrm{M}\times G}\int_{\mathrm{F}}\varphi(\xi_{B},t+B_{t})\, \nu(\mathrm{d}B)\, \mathrm{Q}(\mathrm{d}\xi, \mathrm{d}t).
	\end{align}
	
	In virtue of Mecke's theorem, we need to prove that $\mathrm{M}^{\nu}$ is $\rho$-invariant where 
	\begin{equation*}
		\rho(\xi,t)=(\tau_{t}\xi,-t).
	\end{equation*}
	Consider $\varphi : \mathrm{M}\times G \longrightarrow \R_{+}$ a measurable mapping, then
	\begin{equation*}
		\begin{aligned}
			\rho \circ \mathrm{M}^{\nu}(\varphi)=&\int_{\mathrm{M}\times G} \varphi(\tau_{t}\xi, -t)\, \mathrm{Q}^{\nu}(\mathrm{d}t, \mathrm{d}\xi)\\
			\text{\eqref{eq:explicit_formula}}=&\int_{\mathrm{M}\times G}\int_{\mathrm{F}} \varphi\left(\tau_{t+B_{t}}\xi_{B}, -t-B_{t}\right)\, \nu(\mathrm{d}B)\, \mathrm{Q}(\mathrm{d}t, \mathrm{d}\xi)\\
			\text{(Lemma \ref{lem:technical_lemma_Palm_perturbation})}=&\int_{\mathrm{M}\times G}\int_{\mathrm{F}} \varphi\left(\Gamma(\tau_{t}\xi,\theta_{t}B), -t+\theta_{t}B_{-t}\right)\, \nu(\mathrm{d}B)\, \mathrm{Q}(\mathrm{d}t, \mathrm{d}\xi)\\
			\text{($\nu$ weakly-stationary)}=&\int_{\mathrm{M}\times G}\int_{\mathrm{F}}f\left(\Gamma(\tau_{t}\xi,B), -t+B_{-t}\right)\, \nu(\mathrm{d}f)\, \mathrm{Q}(\mathrm{d}t, \mathrm{d}\xi)\\
			=&\int_{\mathrm{M}\times G}\int_{\mathrm{F}}\psi_{B}(\tau_{t}\xi, -t)\, \nu(\mathrm{d}B)\, \mathrm{Q}(\mathrm{d}\xi,\mathrm{d}t)
		\end{aligned}
	\end{equation*}
	where 
	\begin{equation*}
		\psi_{B}(\xi,t) = \varphi\left(\xi_{B}, t+B_{t}\right)
	\end{equation*}
	As $\mathrm{P}_{0}$ is a Palm measure, Theorem \ref{thm:palm_characterization} ensures that $\mathrm{M}$ is invariant with respect to $\rho$. Consequently,
	\begin{equation*}
		\begin{aligned}
			\rho \circ \mathrm{M}^{\nu}(\varphi) =&\int_{\mathrm{M}\times G}\int_{\mathrm{F}}\psi_{B}(\xi, t)\, \mathrm{Q}(\mathrm{d}\xi,\mathrm{d}t)\, \nu(\mathrm{d}B)\\
			=&\int_{\mathrm{M}\times G}\int_{\mathrm{F}}\varphi\left(\xi_{B}, t+B_{t}\right)\, \nu(\mathrm{d}B)\, \mathrm{Q}(\mathrm{d}t, \mathrm{d}\xi)\\
			=&\int_{\mathrm{M}\times G}\varphi\left(\xi, t\right)\, \mathrm{Q}^{\nu}(\mathrm{d}t, \mathrm{d}\xi)\\
			=&\mathrm{M}^{\nu}(\varphi).
		\end{aligned}
	\end{equation*}
	Hence, $\mathrm{M}^{\nu}$ is $\rho$-invariant and the conclusion follows from Mecke's characterisation of Palm measures.
\end{proof}

\subsubsection{Proof of ergodicity}

This paragraph is dedicated to the proof of Theorem \ref{thm:Palm_perturabtion_ergodicity} and the ergodicity of $\mathrm{P}^{\nu}$. The idea is to translate the ergodicity of $\mathrm{P}^{\nu}$ into an explicit condition on its Palm measure $\mathrm{P}_{0}^{\nu}$. This done via extending of Mecke's characterisation of Palm measures. Informally, the latter states that $\mathrm{P}_{0}$ is the Palm measure of a stationary measure if and only if $\mathrm{Q}(\mathrm{d}\xi, \mathrm{d}t)=\xi(\mathrm{d}t)\, \mathrm{P}_{0}(\mathrm{d}\xi)$ is invariant with respect to 
\begin{equation*}
	\rho(\xi,t)=(\tau_{t}\xi,-t).
\end{equation*}

It turns out that, not only stationarity, but also ergodicity is encoded by $\rho$. This is the content of the next proposition:

\begin{proposition}\label{prop:Palm_ergodicity}
	Let $\mathrm{P}$ be a stationary measure on $\mathcal{M}(G)$. The following propositions are equivalent:
	\begin{enumerate}
		\item $\mathrm{P}$ is ergodic.
		\item Either $\mathrm{P}_{0}(A)=0$ or $\mathrm{P}_{0}({}^{c}A)=0$ for any invariant event $A\in \mathcal{M}(G)$.
		\item The discrete dynamical system $(\mathrm{M}(G)\times G,\mathcal{M}(G)\times G,\mathrm{Q}, \rho)$ is ergodic.
	\end{enumerate}
\end{proposition}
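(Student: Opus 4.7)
The plan is to establish the two equivalences (1)$\Leftrightarrow$(2) and (2)$\Leftrightarrow$(3) separately.

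For (1)$\Leftrightarrow$(2), I exploit the Palm formula and the inverse Palm formula applied to $\tau$-invariant events. If $A \in \mathcal{M}(G)$ is $\tau$-invariant, the indicator $\mathds{1}(\cdot \in A)$ is itself $\tau$-invariant, so \eqref{eq:Palm_measure} and \eqref{eq:inverse_Palm_measure} collapse to
\[ \mathrm{P}_0(A) = \int_{\mathrm{M}}\mathds{1}(\xi\in A)\Big[\int_G\omega(t)\,\xi(\mathrm{d}t)\Big]\,\mathrm{P}(\mathrm{d}\xi) \]
and, using $\mathrm{P}(\{\emptyset\}) = 0$,
\[ \mathrm{P}(A) = \int_{\mathrm{M}}\mathds{1}(\xi\in A)\Big[\int_G k(\tau_{-t}\xi,t)\,\lambda_G(\mathrm{d}t)\Big]\,\mathrm{P}_0(\mathrm{d}\xi). \]
The first identity gives $\mathrm{P}(A) = 0 \Rightarrow \mathrm{P}_0(A) = 0$, while the second gives the converse; the same applies to $A^c$. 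Since ergodicity of a stationary measure is precisely the statement that this dichotomy holds for every $\tau$-invariant $A$, the equivalence (1)$\Leftrightarrow$(2) follows.

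For (2)$\Leftrightarrow$(3), I use the correspondence between $\tau$-invariance on $\mathcal{M}(G)$ and $\rho$-invariance on $\mathcal{M}(G)\otimes\mathcal{G}$. For the easy direction (3)$\Rightarrow$(2), given a $\tau$-invariant $\tilde A \in \mathcal{M}(G)$, the cylinder $A := \tilde A \times G$ is $\rho$-invariant since $(\xi,t) \in A \Leftrightarrow \xi \in \tilde A \Leftrightarrow \tau_t\xi \in \tilde A \Leftrightarrow (\tau_t\xi, -t) \in A$. As $\mathrm{Q}(\tilde A \times G) = \int_{\tilde A}\xi(G)\,\mathrm{P}_0(\mathrm{d}\xi)$ and $\xi(G) > 0$ $\mathrm{P}_0$-a.s. (because $\mathrm{P}_0(\{\emptyset\}) = 0$), the dichotomy from (3) transfers directly to a dichotomy for $\mathrm{P}_0$ on $\tilde A$. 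For the other direction (2)$\Rightarrow$(3), given a $\rho$-invariant $A \in \mathcal{M}(G)\otimes\mathcal{G}$, I propose to associate with it the candidate
\[ \tilde A := \Big\{\xi \in \mathrm{M} : \int_G \mathds{1}_A(\xi,t)\,k(\xi,t)\,\xi(\mathrm{d}t) > 0\Big\} \]
for some kernel $k$ satisfying \eqref{eq:kernel}, and to show $\tilde A$ is $\tau$-invariant up to $\mathrm{P}_0$-null sets by combining the pointwise identity $\mathds{1}_A(\xi,t) = \mathds{1}_A(\tau_t\xi,-t)$ with the $\rho$-invariance of $\mathrm{Q}$ provided by Theorem \ref{thm:palm_characterization}. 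Applying hypothesis (2) to $\tilde A$ and using the Campbell-type representation $\mathrm{Q}(A) = \int \xi(\{t : (\xi,t) \in A\})\,\mathrm{P}_0(\mathrm{d}\xi)$ would then yield (3).

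The main obstacle is the last direction. The $\rho$-invariance condition $(\xi,t) \in A \Leftrightarrow (\tau_t\xi,-t) \in A$ is a joint invariance involving both coordinates, and does not directly produce an invariance statement on $\mathcal{M}(G)$ alone. The crux is to perform the change of variable $(\xi,t) \mapsto (\tau_t\xi,-t)$ under $\mathrm{Q}$, invoke the $\rho$-invariance of $\mathrm{Q}$ in its integral form, and carefully extract $\tau$-invariance of $\tilde A$ modulo $\mathrm{P}_0$-null sets. Since $\mathrm{Q}$ is only $\sigma$-finite, all such manipulations need to be justified in the infinite-measure setting, and a proper treatment of the fixed set of $\rho$ (where $t \in \ker(\tau)$) may be needed to cover degenerate configurations such as periodic ones.
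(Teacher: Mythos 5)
Your treatment of $(1)\Leftrightarrow(2)$ and of the direction $(3)\Rightarrow(2)$ is correct and coincides with the paper's argument (Campbell's formula \eqref{eq:Palm_measure} and the inversion formula \eqref{eq:inverse_Palm_measure} applied to a $\tau$-invariant event, then the identity $\mathrm{Q}(\tilde A\times G)=\int_{\tilde A}\xi(G)\,\mathrm{P}_{0}(\mathrm{d}\xi)$ together with $\mathrm{P}_{0}(\{\emptyset\})=0$). The gap is in $(2)\Rightarrow(3)$, and it stems from a misreading of the $\sigma$-algebra in statement $(3)$: the dynamical system there is \emph{not} equipped with the product $\sigma$-algebra $\mathcal{M}(G)\otimes\mathcal{G}$, but with the cylinder algebra $\mathcal{M}(G)\times G=\{A\times G : A\in\mathcal{M}(G)\}$, as the paper emphasises immediately after the statement. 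On that algebra every event is of the form $\tilde A\times G$, such an event is $\rho$-invariant if and only if $\tilde A$ is $\tau$-invariant, and $\mathrm{Q}(\tilde A\times G)=0$ if and only if $\mathrm{P}_{0}(\tilde A)=0$, since $\xi(G)>0$ for $\xi\neq\emptyset$ and $\mathrm{P}_{0}(\{\emptyset\})=0$. The direction $(2)\Rightarrow(3)$ is therefore immediate; the ``main obstacle'' you describe does not exist for the statement as actually formulated.

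More importantly, the statement you set out to prove in that last step --- that $(2)$ forces ergodicity of $\mathrm{Q}$ under $\rho$ on the full product $\sigma$-algebra $\mathcal{M}(G)\otimes\mathcal{G}$ --- is false, so no refinement of your construction of $\tilde A$ can close the gap. The paper's Example \ref{ex:ergodicity} is an explicit counterexample: for $\mathrm{P}_{0}=\delta_{\Z}$, the Palm measure of the ergodic stationary lattice, and $A=\mathrm{M}(\R)\times B$ with $B$ a symmetric set meeting both $\Z$ and its complement in infinitely many points, $A$ is $\rho$-invariant while $\mathrm{Q}(A)$ and $\mathrm{Q}({}^{c}A)$ are both infinite. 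Your candidate $\tilde A=\{\xi : \int_{G}\mathds{1}_{A}(\xi,t)k(\xi,t)\,\xi(\mathrm{d}t)>0\}$ shows exactly where the argument breaks: in this example both $\tilde A$ and the analogous set built from ${}^{c}A$ have full $\mathrm{P}_{0}$-measure, so hypothesis $(2)$ applied to $\tilde A$ yields no control on $\mathrm{Q}({}^{c}A)$. The fix is simply to work with the cylinder algebra prescribed in the statement.
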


We would like to draw the reader's attention to the fact that $\mathrm{M}(G)\times G$ is not equipped the standard product $\sigma$-algebra $\mathcal{M}(G)\otimes\G$, but with the simpler algebra
\begin{equation*}
	\mathcal{M}(G)\times G =\{A \times G : A\in \mathcal{M}(G)\}.
\end{equation*}

In fact, $\mathcal{M}(G)\otimes \G$ is too large as a $\sigma$-field and one can construct a counterexample where $\mathrm{P}$ is ergodic but $\mathrm{Q}$ is not on $\mathcal{M}(G)\otimes \G$, see Example \ref{ex:ergodicity}.

\begin{proof}[Proof of Proposition \ref{prop:Palm_ergodicity}]
	We start by proving that $(1)$ and $(2)$ are equivalent.\medbreak
	
	$(1)\iff (2)$ It suffices to prove that $\mathrm{P}(A)=0$ if and only if $\mathrm{P}_{0}(A)=0$ for any invariant event $A\in \mathcal{M}(G)$.\medbreak
	
	If $\mathrm{P}(A)=0$, Campbell's formula \eqref{eq:Palm_measure} and the $\tau$-invariance of $A$ yields
	\begin{equation*}
		\mathrm{P}_{0}(A)=\int_{A}\int_{G}\omega(t) \, \xi(\mathrm{d}t)\, \mathrm{P}(\mathrm{d}\xi)=0.
	\end{equation*}
	As we are integrating over a set of null measure, it follows that $\mathrm{P}_{0}(A)=0$. This finishes the direct part of the proof.\medbreak
	
	On the other hand, if $\mathrm{P}_{0}(A)=0$, Campbell's inversion formula \eqref{eq:inverse_Palm_measure} coupled with the invariance of $A$ yields
	similarly
	\begin{equation*}
		\mathrm{P}(A)=\int_{A}\int_{G}k(\tau_{t}\xi,-t)\, \lambda_{G}(\mathrm{d}t)\,\mathrm{P}_{0}(\mathrm{d}\xi)=0.
	\end{equation*}
	
	$(2)\iff (3)$
	Note that $A\times G$ is $\rho$-invariant if and only if $A$ is $\tau$-invariant. Moreover, for any $\tau$-invariant event $A$, the definition of $\mathrm{Q}$ yields 
	\begin{align*}
		\mathrm{Q}(A\times G) =\int_{A}\xi(G)\, \mathrm{P}_{0}(\mathrm{d}\xi).
	\end{align*}
	Now, if $\mathrm{P}_{0}(A)=0$, then  $	\mathrm{Q}(A\times G)=0$ and $\mathrm{Q}$ is ergodic.\medbreak
	
	For the converse, note that $\xi(G)=0$ if and only if $\xi = \emptyset$. As a consequence, $\mathrm{Q}(A\times G)=0$ implies that $\mathrm{P}_{0}(A\backslash \{\emptyset\})=0$. But, with equation \eqref{eq:Palm_measure}, $\mathrm{P}_{0}(\{\emptyset\})=0$, so that $\mathrm{P}_{0}(A)=0$. This finishes the proof.
\end{proof}

The preceding lemma motivates the following definition.
\begin{definition}
	$\mathrm{P}_{0}$ is called \textbf{ergodic} if $\mathrm{P}_{0}(A)=0$ or $\mathrm{P}_{0}({}^{c}A)=0$ for any $\tau$-invariant event $A\in \mathcal{M}(G)$.
\end{definition}
This definition is \textit{improper} as $\mathrm{P}_{0}$ is not stationary with respect to translations. Yet, it is convenient as with Proposition \ref{prop:Palm_ergodicity}, we are now allowed to say that $\mathrm{P}$ is ergodic if and only if $\mathrm{P}_{0}$ is ergodic. Similarly, we define joint-ergodicity.
\begin{definition}
	$(\mathrm{P}_{0}, \nu)$ are called \textbf{jointly-ergodic} if either $\mathrm{P}_{0}\otimes\nu(A)=0$ or $\mathrm{P}_{0}\otimes\nu({}^{c}A)=0$ for any $(\tau, \theta)$-invariant event $A \in \mathcal{M}(G)\times \F(G)$.
\end{definition}

As before, this definition is improper since $(\mathrm{P}_{0}, \nu)$ is not $(\tau,\theta)$-stationary, but it is convenient in virtue of the following lemma.

\begin{lemma}\label{lem:joint_ergodicity}
	The two following statements are equivalent:
	\begin{enumerate}
		\item $(\mathrm{P}, \nu)$ are jointly-ergodic. 
		\item $(\mathrm{P}_{0}, \nu)$ are jointly-ergodic.
	\end{enumerate}
\end{lemma}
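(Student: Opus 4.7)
The plan is to mirror the $(1)\!\iff\!(2)$ part of Proposition~\ref{prop:Palm_ergodicity}, transferring it from $\mathrm{M}(G)$ to the product space $\mathrm{M}(G)\times \mathrm{F}(G)$. Concretely, the whole lemma reduces to the single claim
\begin{equation*}
	(\mathrm{P}\otimes\nu)(A)=0 \iff (\mathrm{P}_{0}\otimes\nu)(A)=0
\end{equation*}
for every $(\tau,\theta)$-invariant $A\in \mathcal{M}(G)\otimes \F(G)$, since then applying this to both $A$ and ${}^{c}A$ immediately gives the equivalence of joint ergodicity for $(\mathrm{P},\nu)$ and for $(\mathrm{P}_{0},\nu)$.

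To prove the claim, the plan is to integrate out the $B$ variable and reduce to the one-variable Palm statement. Define
\begin{equation*}
	g(\xi) = \int_{\mathrm{F}} \mathds{1}_{A}(\xi,B)\, \nu(\mathrm{d}B), \qquad \xi\in \mathrm{M}(G).
\end{equation*}
This is well-defined and measurable in $\xi$ since $\nu$ is $\sigma$-finite. The key step is to show $g$ is $\tau$-invariant. The $(\tau,\theta)$-invariance of $A$ gives $(\xi,\theta_{-t}B)\in A \iff (\tau_{t}\xi, B)\in A$, hence $\mathds{1}_{A}(\tau_{t}\xi,B)=\mathds{1}_{A}(\xi,\theta_{-t}B)$, and then weak stationarity of $\nu$ (i.e.\ $\theta_{-t}\circ \nu=\nu$) yields
\begin{equation*}
	g(\tau_{t}\xi) = \int_{\mathrm{F}}\mathds{1}_{A}(\xi, \theta_{-t}B)\, \nu(\mathrm{d}B) = \int_{\mathrm{F}}\mathds{1}_{A}(\xi,B)\, \nu(\mathrm{d}B) = g(\xi).
\end{equation*}

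The rest is a short accounting with Tonelli: since $\mathrm{P}$, $\mathrm{P}_{0}$ and $\nu$ are all $\sigma$-finite,
\begin{equation*}
	(\mathrm{P}\otimes\nu)(A) = \int_{\mathrm{M}} g\, \mathrm{d}\mathrm{P}, \qquad (\mathrm{P}_{0}\otimes\nu)(A) = \int_{\mathrm{M}} g\, \mathrm{d}\mathrm{P}_{0}.
\end{equation*}
Because $g\geq 0$, each of these integrals vanishes if and only if the $\tau$-invariant set $\{g>0\}\in \mathcal{M}(G)$ has measure zero under the corresponding measure. The equivalence $\mathrm{P}(E)=0\iff \mathrm{P}_{0}(E)=0$ for $\tau$-invariant $E$, which is precisely what is established inside the proof of Proposition~\ref{prop:Palm_ergodicity} (via Campbell's formula \eqref{eq:Palm_measure} in one direction and the inversion formula \eqref{eq:inverse_Palm_measure} in the other), then closes the loop and gives the claim.

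I do not expect any real obstacle; the only delicate point is keeping the two invariances straight, namely using the $(\tau,\theta)$-invariance of $A$ to move the translation off of $\xi$ and onto $B$, and then eliminating it using the stationarity of $\nu$. Once $g$ is shown to be $\tau$-invariant, the proof is a direct application of the one-variable equivalence already contained in Proposition~\ref{prop:Palm_ergodicity}.
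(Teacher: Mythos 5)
Your proof is correct, but it is organized differently from the paper's. The paper proves the implication directly on the product space: it writes $\mathrm{P}_{0}\otimes\nu(A)$ via Campbell's formula as a triple integral over $\mathrm{F}\times\mathrm{M}\times G$, uses the weak-stationarity of $\nu$ and the $(\tau,\theta)$-invariance of $A$ inside that integral to reduce to $\mathds{1}((\xi,B)\in A)$, and concludes; the converse repeats this with the inversion formula \eqref{eq:inverse_Palm_measure}. You instead integrate out $B$ first, producing the section function $g(\xi)=\int_{\mathrm{F}}\mathds{1}_{A}(\xi,B)\,\nu(\mathrm{d}B)$, prove that $g$ is $\tau$-invariant (correctly: the identity $\mathds{1}_{A}(\tau_{t}\xi,B)=\mathds{1}_{A}(\xi,\theta_{-t}B)$ followed by $\theta_{-t}\circ\nu=\nu$ is exactly the right bookkeeping), and then invoke the one-variable equivalence $\mathrm{P}(E)=0\iff\mathrm{P}_{0}(E)=0$ for the $\tau$-invariant set $E=\{g>0\}$, which is already established inside the proof of Proposition~\ref{prop:Palm_ergodicity}. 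The underlying mechanism is identical in both arguments -- trade the translation on $\xi$ for a shift on $B$ via invariance of $A$, then kill the shift via stationarity of $\nu$ -- but your factorization avoids redoing the Campbell and inversion computations on the product space and makes visible exactly where each hypothesis enters, at the mild cost of relying on an intermediate fact that the paper only states inside another proof. One small caveat: if $\nu$ is not finite, $g$ may take the value $+\infty$, but since $g\geq 0$ the equivalence $\int g\,\mathrm{d}\mathrm{P}=0\iff \mathrm{P}(\{g>0\})=0$ is unaffected, so the argument goes through.
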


\begin{proof}
	Assume $(1)$ is satisfied and let $A$ be a $(\tau,\theta)$-invariant set with $\mathrm{P}\otimes\nu(A)=0$. Combining Campbell's formula \eqref{eq:Palm_measure} with the weak-stationarity of $\nu$ and the invariance of $A$ yields
	\begin{align*}
		\mathrm{P}_{0}\otimes\nu(A)&=\int_{\mathrm{F}}\int_{\mathrm{M}}\int_{G} \mathds{1}((\tau_{t}\xi,B)\in  A))\omega(t)\,\xi(\mathrm{d}t)\,\mathrm{P}_{0}(\mathrm{d}\xi)\, \nu(\mathrm{d}B)\\
		\text{(weak-stationarity of $\nu$)}&=\int_{\mathrm{F}}\int_{\mathrm{M}}\int_{G} \mathds{1}((\tau_{t}\xi,\theta_{t}B)\in  A))\omega(t)\,\xi(\mathrm{d}t)\,\mathrm{P}_{0}(\mathrm{d}\xi) \,\nu(\mathrm{d}B)\\
		\text{($(\tau, \theta)$-invariance of $A$)}&=\int_{\mathrm{F}}\int_{\mathrm{M}}\int_{G} \mathds{1}((\xi,B)\in  A))\omega(t)\, \xi(\mathrm{d}t)\,\mathrm{P}_{0}(\mathrm{d}\xi)\,\nu(\mathrm{d}B)\\
		&=0
	\end{align*}
	as $A$ has null measure. Hence, $(2)$ is satisfied.\medbreak
	
	The converse derives from Campbell's inverse formula in a similar fashion.
\end{proof}

We now prove Theorem \ref{thm:Palm_perturabtion_ergodicity}.

\begin{proof}[Proof of Theorem \ref{thm:Palm_perturabtion_ergodicity}]
	Following the conclusion of Theorem \ref{prop:Palm_ergodicity}, it suffices to prove that either $\mathrm{P}_{0}^{\nu}(A)=0$ or $\mathrm{P}_{0}^{\nu}({}^{c}A)=0$ for any $\tau$-invariant event $A \in \mathcal{M}(G)$. By definition,
	\begin{equation*}
		\mathrm{P}_{0}^{\nu}(A) = \mathrm{P}_{0}\otimes\nu(\Gamma^{-1}(A)).
	\end{equation*}
	
	We will now prove that $\Gamma^{-1}(A)$ is $(\tau, \theta)$-invariant. This is a simple consequence of Lemma \ref{lem:technical_lemma_Palm_perturbation}. Indeed, if $\Gamma(\xi,B)\in A$, then $\Gamma(\tau_{t}\xi,\theta_{t}B)=\tau_{t+B_{t}}\Gamma(\xi,B)\in A$. \medbreak
	
	As $\mathrm{P}$ and $\nu$ are jointly-ergodic, it follows from Lemma \ref{lem:joint_ergodicity} that either
	\begin{equation*}
		\mathrm{P}_{0}\otimes\nu(\Gamma^{-1}(A))=0 \quad \text{or}\quad \mathrm{P}_{0}\otimes\nu(\Gamma^{-1}({}^{c}A))=0.
	\end{equation*}
	But, this is means exactly either $\mathrm{P}_{0}^{\nu}(A)=0$ or $\mathrm{P}_{0}^{\nu}({}^{c}A)=0$. Hence, $\mathrm{P}_{0}^{\nu}$ is ergodic.
\end{proof}

We conclude this paragraph with a counter-example of Proposition \ref{prop:Palm_ergodicity} when condition $(3)$ in Proposition \ref{prop:Palm_ergodicity} is extended to the product $\sigma$-algebra $\mathcal{M}(G)\otimes \G$.

\begin{example}\label{ex:ergodicity}
	Let $\smash{\mathrm{P}_{0}=\delta_{\Z}}$ be the Palm measure of the stationary lattice $\Z+U$, which is an ergodic point process. Consider a symmetric set  $B\subset \R$ such that $\Vol{B}=\Vol{{}^{c}B}=+\infty$ and let $A=\mathrm{M}(\R^{d})\times B$. As $B$ is symmetric, $A$ is $\rho$-invariant, and both $\mathrm{Q}(A)$ and $\mathrm{Q}({}^{c}A)$ are infinite. Hence, $\mathrm{Q}$ is not ergodic on $\mathrm{M}(\R)\otimes \R$.
\end{example}

\subsubsection{Perturbed Palm measures supported on $\mathrm{N}_{s}(G)$}\label{sec:counting_measure_proof}

This section is dedicated to the Proof of Corollary \ref{cor:atomic_measure}.

\begin{proof}[Proof of Corollary \ref{cor:atomic_measure}]
	Denote by $\xi^{\otimes_{2!}}$ the second factorial measure of $\xi \in \mathrm{N}_{s}(G)$, that is
	\begin{equation*}
		\xi^{\otimes_{2!}}(\varphi) =\int_{G^{2}} \mathds{1}(s\neq t) \varphi(s,t)\, \xi(\mathrm{d}s)\,\xi(\mathrm{d}t).
	\end{equation*}
	$\xi^{\otimes_{2!}}$ is measure on $G^{2}$ which counts the number of pairs of distinct points of $\xi$.\medbreak
	
	For $\xi \in \mathrm{N}_{s}(G)$ and $B \in \mathrm{F}(G)$, we introduce 
	\begin{equation*}
		N(\xi,B)=	\int_{G^{2}}\mathds{1}(s+B_{s}=t+B_{t})\,\xi^{\otimes_{2!}}(\mathrm{d}s, \mathrm{d}t).
	\end{equation*}
	$N$ counts the number of distinct atoms $s,t$ of $\xi$ which are sent to the same location after $\xi$ has been perturbed by $B$, \textit{i.e.} $s+B_{s}=t+B_{t}$. Consequently, $\mathrm{P}_{0}^{\nu}$ is supported on $\mathrm{N}_{s}(G)$ if and only if
	\begin{equation*}
		\int_{\mathrm{F}}\int_{\mathrm{N}_{s}} N(\xi,B)\,  \mathrm{P}_{0}(\xi)\, \nu(\mathrm{d}B) =0.
	\end{equation*}
	But,
	\begin{equation*}
		\begin{aligned}
			\int_{\mathrm{F}}\int_{\mathrm{N}_{s}} N(\xi,B)\,  \mathrm{P}_{0}(\xi)\, \nu(\mathrm{d}B)&= \int_{\mathrm{N}_{s}}\int_{\mathrm{F}}\int_{G^{2}}\mathds{1}(s-t=B_{t}-B_{s})\,\xi^{\otimes_{2!}}(\mathrm{d}s, \mathrm{d}t)\,\nu(\mathrm{d}B)\,\mathrm{P}_{0}(\mathrm{d}\xi)\\
			&=\int_{\mathrm{N}_{s}}\int_{\mathrm{F}}\int_{G^{2}}\mathds{1}(s-t=\theta_{s}B_{t-s})\,\xi^{\otimes_{2!}}(\mathrm{d}s, \mathrm{d}t)\,\nu(\mathrm{d}B)\,\mathrm{P}_{0}(\mathrm{d}\xi)\\
			\text{(weak-stationarity of $\nu$)}&=\int_{\mathrm{N}_{s}}\int_{\mathrm{F}}\int_{G^{2}}\mathds{1}(s-t=B_{t-s})\,\xi^{\otimes_{2!}}(\mathrm{d}s, \mathrm{d}t)\,\nu(\mathrm{d}B)\,\mathrm{P}_{0}(\mathrm{d}\xi)\\
			\text{($\nu$ pointwise non-atomic)}&=\int_{\mathrm{N}_{s}}\int_{G}\int_{G}\underbrace{\nu_{t-s}(\{s-t\})}_{=0 \text{ as $s\neq t$}}\,\xi^{\otimes_{2!}}(\mathrm{d}s, \mathrm{d}t)\,\mathrm{P}_{0}(\mathrm{d}\xi)\\
			&=0.
		\end{aligned}
	\end{equation*}
	As $\mathrm{P}_{0}^{\nu}$ is supported on $\mathrm{N}_{s}(G)$, so is $\mathrm{P}^{\nu}$. This finishes the proof.
\end{proof}

\subsubsection{On the finiteness of perturbed Palm measures}\label{sec:probabilistic_proof}

This section is devoted to the proofs of the results stated in Section \ref{sec:probabilistic_interpretration}. We start with Proposition \ref{prop:compact_finite_perturbed_palm_measure}.

\begin{proof}Proof of Proposition \ref{prop:compact_finite_perturbed_palm_measure}
	The proposition derives from Campbell's inverse formula \eqref{eq:inverse_Palm_measure} with
	\begin{equation*}
		k(\xi,t) = \frac{1}{\xi(G)}, \quad \xi \neq \emptyset.
	\end{equation*}
	
	As $G$ is compact, $k$ is well-defined for any locally finite measure $\xi$.\medbreak
	
	The proof relies on the following observation,
	\begin{equation*}
		\xi_{B}(G) = \int_{G}\mathds{1}_{G}(t+B_{t})\, \xi(\mathrm{d}t)=\xi(G).
	\end{equation*}
	Hence, 
	\begin{align*}
		k(\tau_{-t}\xi_{B},t)&=k(\xi_{B},t)\\
		&=k(\xi,t)\\
		&=k(\tau_{-t}\xi, t)
	\end{align*}
	so that
	\begin{align*}
		\mathrm{P}^{\nu}(\mathrm{M})&=\int_{\mathrm{F}}\int_{\mathrm{M}}\int_{G}k(\tau_{-t}\xi_{B},t) \,\mathrm{d}t\, \mathrm{P}_{0}(\mathrm{d}\xi)\,\mathrm{\nu}(\mathrm{d}f)\\
		&=\int_{\mathrm{F}}\int_{\mathrm{M}}\int_{G} k(\tau_{-t}\xi,t)\, \mathrm{d}t\, \mathrm{P}_{0}(\mathrm{d}\xi)\,\mathrm{\nu}(\mathrm{d}f)\\
		&=\nu(\mathrm{F})\int_{\mathrm{M}}\int_{G} k(\tau_{-t}\xi,t)\, \mathrm{d}t\, \mathrm{P}_{0}(\mathrm{d}\xi)\\
		&=\mathrm{P}(\mathrm{M})\nu(\mathrm{F}),
	\end{align*}
	which is finite if and only if both $\mathrm{P}$ and $\nu$ are finite.
\end{proof}

We follow now with the counter-example introduced in Proposition \ref{prop:counter_example_probability}.

\begin{proof}[Proof of Proposition \ref{prop:counter_example_probability}]
	Denote by $\esp{.}$ the expectation with respect to $B$. As $B$ has increasing sample paths, equation \eqref{eq:inverse_palm_real_line} yields 
	\begin{equation*}
		\mathrm{P}^{\nu}(\mathrm{M}) = \esp{1+B_{1}} =+\infty.
	\end{equation*}
\end{proof}

\section{Appendices}\label{app:compact-open}

This paragraph can be omitted at first reading as it deals mainly with technical details related to $\sigma$-algebras and measurability. The goal of this appendix is to prove that $\Gamma$, as defined in Section \ref{sec:perturbation_palm}, is measurable. To this end, we will add some topological structure on both $\mathrm{M}(G)$ and $\mathrm{F}(G)$.\medbreak

The \textbf{weak-* topology} is the topology on $\mathrm{M}(G)$ induced by the mappings 
\begin{equation*}
	\Lambda_{\varphi} : \left\{
	\begin{array}{cccc}
		&\mathrm{M}(G)&\longrightarrow&\R\\
		&\xi&\longrightarrow&\xi(\varphi)
	\end{array}
	\right..
\end{equation*}
where $\varphi$ ranges through $\mathcal{C}_{c}(G, \R)$ the set of compactly supported real-valued continuous function.\medbreak

As for $\mathrm{F}(G)$, recall it is subset of continuous functions. Hence, it can be endowed with the induced compact-open topology generated by the subbasis 
\begin{equation*}
	V_{K,U}=\{f \in \mathrm{F}(G) : f(K)\subset U\}
\end{equation*}
where $K$ is compact and $U$ open. This topology was initially introduced by Fox \cite{Fox1945} and is relevant as it extends the topology of uniform convergence on all compact sets to non-metric spaces. For more details and a formal introduction on the topic, see Bredon \cite[Section VII.2]{Bredon1993}. \medbreak

The weak-* and the open-compact topologies induce two Borel $\sigma$-fields denoted respectively by $\borel{\mathrm{M}}$ and $\borel{\mathrm{F}}$. The first step into establishing the measurability of $\Gamma$ consists into proving that the aforementioned $\sigma$-algebras coincide with the standard algebra on $\mathrm{M}(G)$ and $\mathrm{F}(G)$.

\begin{proposition}\label{prop:cylindrical_algebra}
	$\mathcal{M}(G)=\borel{\mathrm{M}}$ and $\mathcal{F}(G)=\borel{\mathrm{F}}$
\end{proposition}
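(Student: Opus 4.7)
The plan is to prove each identity by two inclusions; in each case the $\sigma$-algebras are generated by natural families of maps, and the strategy is to check cross-measurability of these generators, using approximation and a second-countability reduction.

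For $\mathcal{M}(G) = \borel{\mathrm{M}}$, the inclusion $\mathcal{M}(G) \subset \borel{\mathrm{M}}$ follows from Urysohn's lemma: since $G$ is locally compact Hausdorff, for every compact $K$ one constructs a decreasing sequence $(\varphi_n) \subset \mathcal{C}_c(G, \R)$ with $\varphi_n \downarrow \mathds{1}_K$; dominated convergence then realises $\Lambda_K = \lim_n \Lambda_{\varphi_n}$ as a pointwise limit of weak-$*$ continuous maps, hence Borel. Conversely, for $\borel{\mathrm{M}} \subset \mathcal{M}(G)$, I would first show that each $\Lambda_\varphi$ with $\varphi \in \mathcal{C}_c(G, \R)$ is $\mathcal{M}(G)$-measurable: splitting $\varphi = \varphi_+ - \varphi_-$, the dyadic simple approximations built from the compact level sets $\{\varphi_+ \geq k 2^{-n}\} \subset \supp{\varphi}$ increase monotonically to $\varphi_+$, exhibiting $\Lambda_\varphi$ as a pointwise limit of finite combinations of $\Lambda_K$. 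To deduce that the entire Borel $\sigma$-algebra of the weak-$*$ topology coincides with $\sigma(\Lambda_\varphi : \varphi \in \mathcal{C}_c)$, I would invoke second countability of this topology, itself a consequence of $G$ being Polish and hence of $\mathcal{C}_c(G, \R)$ being separable.

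For $\mathcal{F}(G) = \borel{\mathrm{F}}$, the inclusion $\mathcal{F}(G) \subset \borel{\mathrm{F}}$ is immediate because each evaluation $\pi_t : B \mapsto B_t$ is continuous in the compact-open topology. For the reverse direction, I would show that every sub-basic open set $V_{K,U}$ belongs to $\mathcal{F}(G)$. Since $G$ is metrizable with some metric $d$, define $U_n := \{x \in G : d(x, U^c) \geq 1/n\}$: the $U_n$ are closed and $U = \bigcup_n U_n$. As $B(K)$ is compact and the $U_n$ increase to $U$, one has $B(K) \subset U$ iff $B(K) \subset U_n$ for some $n$; fixing a countable dense $D \subset K$, the continuity of $B$ together with the closedness of $U_n$ makes this in turn equivalent to $B_t \in U_n$ for every $t \in D$, so
\begin{equation*}
V_{K, U} = \bigcup_n \bigcap_{t \in D} \pi_t^{-1}(U_n) \in \mathcal{F}(G).
\end{equation*}
A final second-countability argument (the compact-open topology on $\mathrm{F}(G)$ is metrizable and separable when $G$ is Polish) reduces the uncountable subbasis $\{V_{K, U}\}$ to a countable one and completes the inclusion.

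The step I expect to require the most care is the second-countability reduction used in both parts: one must genuinely exhibit a countable subbasis of each topology whose elements have already been shown to lie in the corresponding $\sigma$-algebra. A cleaner, if less self-contained, alternative is to cite standard results (for instance Kallenberg's monographs on random measures) stating that the vague topology on $\mathrm{M}(G)$ is Polish with Borel $\sigma$-algebra generated by $\{\Lambda_K\}_K$, together with the analogous statement for $C(G, G)$ under the compact-open topology; the remaining measurability then follows directly from the approximation arguments above.
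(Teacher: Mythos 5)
Your proof is correct and follows the same two-inclusion architecture as the paper, but it is more careful precisely where care is needed, and in one place it actually repairs the paper's argument. For $\borel{\mathrm{F}}\subset\mathcal{F}(G)$ the paper writes $V_{K,U}=\bigcap_{n}\pi_{t_n}^{-1}(U)$ for a dense sequence $(t_n)$ in $K$; as stated this is only the inclusion $V_{K,U}\subset\bigcap_{n}\pi_{t_n}^{-1}(U)$, since a continuous $f$ with $f(t_n)\in U$ for all $n$ may send a limit point of $(t_n)$ to the boundary of $U$. Your exhaustion of $U$ by the closed sets $U_n=\{x : d(x,U^{c})\geq 1/n\}$, combined with compactness of $f(K)$, is exactly the needed fix: $f(K)\subset U$ iff $f(K)\subset U_n$ for some $n$ iff $f(D)\subset U_n$, whence $V_{K,U}=\bigcup_{n}\bigcap_{t\in D}\pi_{t}^{-1}(U_n)\in\mathcal{F}(G)$. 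You also make explicit the second-countability step required to pass from ``every subbasic open set is measurable'' to ``every Borel set is measurable,'' which the paper compresses into ``it follows easily''; either your reduction to a countable subbasis or the citation of the standard Polish-space results for the vague and compact-open topologies closes this gap legitimately. Finally, you supply an actual argument for $\mathcal{M}(G)=\borel{\mathrm{M}}$ (Urysohn approximation $\varphi_n\downarrow\mathds{1}_K$ in one direction, dyadic level-set approximation of $\varphi\in\mathcal{C}_c(G,\R)$ by combinations of $\Lambda_K$ in the other), which the paper leaves to the reader; both halves are standard and correct. The only point worth flagging is that the metric $d$ on $G$ should be justified before use: it exists by Urysohn's metrization theorem, since $G$ is locally compact Hausdorff and second countable, hence regular and second countable.
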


\begin{proof}
	The first equality is a routine argument which is left to the reader as it follows almost immediately from the definition.\medbreak
	
	As for the second one, recall that $\mathcal{F}(G)$ is the cylindrical $\sigma$-algebra, that is the smallest $\sigma$-algebra for which the evaluation maps $\pi_{t} : f \in \mathrm{F}(G)\longmapsto f(t) \in G$ are all measurable. From \cite[Proposition 2.3]{Bredon1993}, $\pi_{t}$ is continuous with respect to the compact-open topology, hence it is measurable with respect to $\borel{\mathrm{F}}$. Consequently, $\mathcal{F}(G) \subset \borel{\mathrm{F}}$ \medbreak
	
	Conversely, as $G$ is locally compact and second countable, it is separable. If $K \subset G$ is compact and $\{t_{n} : n \in \N\}$ is a dense subset of $K$, then for any $U$ open, one has
	\begin{equation*}
		V_{K,U} = \bigcap_{n \in \N}\pi_{t_{n}}^{-1}(U)\in \mathcal{F}(G).
	\end{equation*}
	As the sets $V_{K,U}$ generates  $\borel{\mathrm{F}}$, it follows easily that $\borel{\mathrm{F}}\subset \mathcal{F}(G)$.
\end{proof}

To establish the measurability of $\Gamma$, it would thus suffice to prove that $\Gamma$ is continuous. This will be true when $G$ is compact and we start with this case.

\begin{proposition}\label{prop:measurability}
	If $G$ is compact, $\Gamma$ is continuous. 
\end{proposition}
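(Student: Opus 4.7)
The plan is to establish continuity of $\Gamma$ with respect to the weak-* topology on $\mathrm{M}(G)$ and the compact-open topology on $\mathrm{F}(G)$, and then invoke Proposition \ref{prop:cylindrical_algebra} to upgrade continuity to measurability with respect to $\mathcal{M}(G) \otimes \mathcal{F}(G)$. Since $G$ is compact one has $\mathcal{C}_{c}(G,\R)=\mathcal{C}(G,\R)$, and the weak-* topology is generated by the evaluation maps $\xi \mapsto \xi(\varphi)$ for $\varphi \in \mathcal{C}(G,\R)$; I would therefore reduce to showing that for each fixed $\varphi$ the map
$$
(\xi, B) \longmapsto \int_G \varphi(t + B_{t})\, \xi(\mathrm{d}t)
$$
is continuous on $\mathrm{M}(G) \times \mathrm{F}(G)$.

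Given a convergent sequence $(\xi_n, B_n) \to (\xi, B)$, the argument rests on two observations. First, compactness of $G$ turns compact-open convergence into uniform convergence $B_n \to B$ on all of $G$. Second, $\varphi$ is continuous on the compact Hausdorff group $G$, hence uniformly continuous, so that $\varphi \circ (\Id + B_n) \to \varphi \circ (\Id + B)$ in the supremum norm. Testing weak-* convergence against the constant function $1 \in \mathcal{C}(G,\R)$ also shows $\xi_n(G) \to \xi(G)$, so the total masses form a bounded sequence. I would then split
$$
\Gamma(\xi_n, B_n)(\varphi) - \Gamma(\xi, B)(\varphi) = \int_G \bigl[\varphi\circ(\Id+B_n) - \varphi\circ(\Id+B)\bigr]\, \xi_n(\mathrm{d}t) + \bigl(\xi_n - \xi\bigr)\bigl(\varphi\circ(\Id+B)\bigr),
$$
bound the first term by $\|\varphi\circ(\Id+B_n)-\varphi\circ(\Id+B)\|_{\infty}\cdot \xi_n(G)\to 0$, and observe that the second vanishes by weak-* convergence applied to the fixed continuous test function $\varphi\circ(\Id+B)$.

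The hard part will be justifying the uniform-convergence step in a compact Hausdorff abelian group that is not \emph{a priori} metric. I would handle this by invoking the paper's standing assumptions: $G$ being second-countable, Hausdorff, locally compact, and, here, compact, is metrizable, so the compact-open topology on $C(G,G)$ coincides with uniform convergence in the metric, and sequential reasoning is legitimate throughout. Combined with uniform continuity of $\varphi$, this is what makes the estimate above work and yields the continuity — hence Borel measurability — of $\Gamma$.
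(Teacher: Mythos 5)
Your estimate is the same as the paper's: both arguments reduce to continuity of the pairing $(\xi,f)\longmapsto \xi(f)$ after composing with $f\mapsto\varphi\circ(\Id+f)$, split the difference into a term bounded by $\|\varphi\circ(\Id+B_{n})-\varphi\circ(\Id+B)\|_{\infty}$ times the total mass and a term handled by weak-* convergence against the fixed test function $\varphi\circ(\Id+B)$, and use compactness of $G$ to turn compact-open convergence into uniform convergence. The one substantive difference is that you argue with sequences while the paper exhibits explicit open neighbourhoods of $(\xi_{0},f_{0})$. That difference is not cosmetic: sequential continuity implies continuity only on first-countable (or at least sequential) spaces, and your metrizability remark covers $G$, hence $\mathcal{C}(G,G)$ with the sup metric, but does not by itself cover $\mathrm{M}(G)$ with the weak-* topology, which is where the sequence $\xi_{n}$ lives. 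The missing fact is true --- for $G$ compact metrizable, $\mathcal{C}(G,\R)$ is separable, and on the positive cone the countable family of maps $\xi\mapsto\xi(\varphi_{k})$, with the constant function $1$ included so as to control total masses, generates a metrizable topology equal to the weak-* topology --- but it needs to be stated and justified, since it is exactly the ``hard part'' you flag. The paper's formulation, taking $U=\{\xi : |\xi(f_{0})-\xi_{0}(f_{0})|<\varepsilon/2,\ \xi(G)<2\xi_{0}(G)\}$ together with a uniform ball around $f_{0}$, runs the identical estimate without ever invoking sequences, and so sidesteps the first-countability question entirely; if you keep the sequential phrasing, add the separability argument for $\mathrm{M}(G)$, otherwise switch to neighbourhoods.
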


\begin{proof}
	Denote by $\mathcal{C}(G,G)$ denotes the space of real-valued continuous functions on $G$ equipped with the compact-open topology.\medbreak
	
	By definition of the induced topology, $\Gamma$ is continuous if and only if
	\begin{equation*}
		\Lambda_{\varphi}\circ \Gamma(\xi,f)=\int_{G}\varphi(t+f(t)) \, \xi(\mathrm{d}t)=\Lambda_{\varphi\circ (\Id+f)}(\xi)
	\end{equation*}
	is continuous for any $\varphi \in \mathcal{C}(G,\R)$. As $f \in \mathrm{M}(G) \longrightarrow \varphi \circ (\Id+f) \in \mathcal{C}(G,\R)$ is continuous in virtue of \cite[Theorem 2.10]{Bredon1993}, this amounts to prove
	\begin{equation*}
		\Pi : \left\{
		\begin{array}{cccc}
			&\mathrm{M}(G)\times\mathcal{C}(G,\R)&\longrightarrow&\R\\
			&(\xi,f)&\longrightarrow&\xi(f)
		\end{array}
		\right.
	\end{equation*}
	is continuous.\medbreak
	
	Fix $(\xi_{0},f_{0}) \in \mathrm{M}(G)\times\mathcal{C}(G,\R)$ and $\varepsilon>0$. We will exhibit an open neighbourhood $W$ of $(\xi_{0},f_{0})$ so that
	\begin{equation*}
		\left|\Pi(\xi,f) -\Pi(\xi_{0},f_{0})\right|<\varepsilon, \quad (\xi,f) \in W.
	\end{equation*}
	This will be sufficient to establish the continuity of $\Pi$, and \textit{a fortiori} $\Gamma$.\medbreak
	
	Consider 
	\begin{equation*}
		U=\left\{\xi \in \mathrm{M}(G) : |\Lambda_{f_{0}}(\xi)-\Lambda_{f_{0}}(\xi_{0})| < \frac{\varepsilon}{2} \text{ and }  \xi(G)< 2\xi_{0}(G)\right\}.
	\end{equation*}
	By definition of the induced topology, $U$ is an open-neighbourhood of $\xi_{0}$.\medbreak
	
	On the other hand, as $G$ is compact, the compact-open topology coincides with the topology of uniform convergence, see \cite[Theorem 2.12]{Bredon1993}. Thus, 
	\begin{equation*}
		V=\left\{f \in \mathrm{C}(G,\R) : \|f-f_{0}\|_{\infty}<\frac{\varepsilon}{4\xi_{0}(G)}\right\},
	\end{equation*}
	is an open neighbourhood of $f_{0}$. Here $\|.\|_{\infty}$ denotes the norm of uniform convergence on $G$.\medbreak
	
	Set $W=U\times V$ and note that for any $(\xi,f)\in W$,
	\begin{align*}
		\left|\Pi(\xi,f) -\Pi(\xi_{0},f_{0})\right|&\leq \left|\Pi(\xi,f) -\Pi(\xi,f_{0})\right| +\left|\Pi(\xi,f_{0}) -\Pi(\xi_{0},f_{0})\right|\\
		&=\left|\int_{G}f(t)-f_{0}(t)\,  \xi(\mathrm{d}t)\right|+\left|\Lambda_{f_{0}}(\xi_{0})-\Lambda_{f_{0}}(\xi)\right|\\
		&<\xi(G) \|f-f_{0}\|_{\infty}  + \left|\Lambda_{f_{0}}(\xi_{0})-\Lambda_{f_{0}}(\xi)\right|\\
		&<\varepsilon.
	\end{align*}
	in virtue of the definition of $W$. This finishes the proof.
\end{proof}

When $G$ is no longer compact, we approximate $\Gamma$ as simple limit of continuous function.

\begin{proposition}\label{prop:prop:measurability}
	$\Gamma$ is measurable.
\end{proposition}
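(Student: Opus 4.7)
The plan is to reduce to the compact case established in Proposition \ref{prop:measurability} by localising in the $G$-variable, then to pass to the limit. As $G$ is locally compact and second-countable, it is $\sigma$-compact, so I fix an exhaustion $G=\bigcup_{n}K_{n}$ by an increasing sequence of compact sets together with cutoff functions $\chi_{n}\in \mathcal{C}_{c}(G,[0,1])$ such that $\chi_{n}=1$ on $K_{n}$ and $\chi_{n}\nearrow 1$ pointwise. For each $\varphi\in \mathcal{C}_{c}(G,\R)$, set
\[\Phi_{n}(\xi,B)=\int_{G}\chi_{n}(t)\,\varphi(t+B_{t})\,\xi(\mathrm{d}t).\]

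The first step is to show that each $\Phi_{n}$ is jointly continuous on $\mathrm{M}(G)\times \mathrm{F}(G)$, by adapting the template of Proposition \ref{prop:measurability}. Around a fixed $(\xi_{0},B_{0})$, the $\xi$-variable is handled exactly as before, using a weak-$*$ neighbourhood of $\xi_{0}$ controlled by $\Lambda_{\chi_{n}\cdot \varphi\circ(\Id+B_{0})}$ (which is compactly supported and continuous) together with a uniform bound $\xi(\supp{\chi_{n}})\leq 2\xi_{0}(\supp{\chi_{n}})$. For the $B$-variable, the key input is that $\varphi\in \mathcal{C}_{c}(G,\R)$ is uniformly continuous on $G$ with respect to the canonical uniform structure of the locally compact group; combined with closeness of $B$ to $B_{0}$ in the compact-open topology restricted to $\supp{\chi_{n}}$, this gives the uniform smallness of $|\varphi(t+B_{t})-\varphi(t+B_{0,t})|$ for $t\in \supp{\chi_{n}}$, and the resulting integrals against $\xi$ are estimated as in the compact case.

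The second step is to let $n\to \infty$. By condition \ref{cond:C2}, $\xi_{B}=(\Id+B)\circ \xi$ is locally finite, so $\int_{G}|\varphi(t+B_{t})|\,\xi(\mathrm{d}t)=\xi_{B}(|\varphi|)<+\infty$, which furnishes an integrable majorant. Dominated convergence then yields $\Phi_{n}(\xi,B)\to \Lambda_{\varphi}(\Gamma(\xi,B))$ for every $(\xi,B)$, so $\Lambda_{\varphi}\circ \Gamma$ is a pointwise limit of continuous maps, hence measurable, for every $\varphi \in \mathcal{C}_{c}(G,\R)$. Since $\mathcal{M}(G)=\borel{\mathrm{M}}$ is generated by the family $\{\Lambda_{\varphi}\}_{\varphi\in \mathcal{C}_{c}(G,\R)}$ by Proposition \ref{prop:cylindrical_algebra}, this suffices to conclude that $\Gamma$ is measurable.

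The main obstacle lies in step one: in the compact case of Proposition \ref{prop:measurability}, compactness of $G$ delivered both the uniform continuity of $\varphi$ and the continuity of $B\mapsto \varphi\circ (\Id+B)$ into $\mathcal{C}(G,\R)$ essentially for free via \cite[Theorem 2.10, Theorem 2.12]{Bredon1993}. Without that, one must isolate the part of $B$ that genuinely enters the integral, namely $B$ restricted to the compact set $\supp{\chi_{n}}$, and exploit the fact that $\varphi$ is uniformly continuous on the whole group $G$. Condition \ref{cond:C2} is precisely what makes the monotone exhaustion argument succeed.
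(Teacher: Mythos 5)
Your proof is correct and follows essentially the same route as the paper: exhaust $G$ by an increasing sequence of compacts, show the localised functionals are jointly continuous, and pass to the pointwise limit by dominated convergence using condition \ref{cond:C2}. Your first step is in fact slightly more careful than the paper's, which invokes the compact-case Proposition \ref{prop:measurability} for $\Gamma(\restr{\xi}{K_{n}},f)$ even though the ambient group is no longer compact there; your direct argument via uniform continuity of $\varphi$ and the compact-open topology restricted to $\supp{\chi_{n}}$ closes that small gap.
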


\begin{proof}[Proof of Proposition \ref{prop:measurability}]
	As $G$ is second-countable and locally compact, it is $\sigma$-compact. Let $K_{n}$ be a sequence of increasing compact sets such that
	\begin{equation*}
		\bigcup_{n\in \N}K_{n}=G,
	\end{equation*}
	and consider
	\begin{equation*}
		\Gamma_{n}(\xi, f)=\Gamma(\restr{\xi}{K_{n}},f).
	\end{equation*}
	With Proposition \ref{prop:measurability}, $\Gamma_{n}$ is continuous, hence measurable. On the other, $\Gamma_{n}$ converges simply to $\Gamma$ as for all $\varphi \in \mathrm{C}_{c}(G, \R)$, 
	\begin{equation*}
		\begin{aligned}
			\Lambda_{\varphi}\circ \Gamma_{n}(\xi, f) =& \int_{K_{n}} \varphi(t+f(t)) \, \xi(\mathrm{d}t)\\
			\text{(dominated convergence theorem)}\longrightarrow& \int_{K_{n}} \varphi(t+f(t)) \, \xi(\mathrm{d}t)\\
			=&\Lambda_{\varphi}\circ\Gamma(\xi, f).
		\end{aligned}
	\end{equation*}
	As limit of continuous functions, it follows that $\Gamma$ is measurable. This finishes the proof.
\end{proof}

\clearpage
\bibliographystyle{alpha}
\bibliography{Reference.bib}

\end{document}